\renewcommand{\le}{\leqslant}
\renewcommand{\ge}{\geqslant}
\numberwithin{equation}{section}
\newcommand\reallywidehat[1]{%
\savestack{\tmpbox}{\stretchto{%
  \scaleto{%
    \scalerel*[\widthof{\ensuremath{#1}}]{\kern.1pt\mathchar"0362\kern.1pt}%
    {\rule{0ex}{\textheight}}
  }{\textheight}%
}{2.4ex}}%
\stackon[-6.9pt]{#1}{\tmpbox}%
}
\theoremstyle{plain}
\newtheorem{theorem}{Theorem}[section]
\newtheorem*{theorem*}{Theorem}
\newtheorem{lemma}[theorem]{Lemma}
\newtheorem{corollary}[theorem]{Corollary}
\newtheorem{conjecture}[theorem]{Conjecture}
\theoremstyle{remark}
\theoremstyle{definition}
\newtheorem{remark}[theorem]{Remark}
\newcommand{\comment}[1]{}
\theoremstyle{remark}
\theoremstyle{definition}
\DeclareMathOperator{\supp}{supp}
\newcommand{\be}{\begin{equation}}
\newcommand{\ee}{\end{equation}}
\newcommand{\bee}{\begin{equation*}}
\newcommand{\eee}{\end{equation*}}
\def\supp{\hbox{supp\,}}
\def\steven#1{\noindent
\textcolor{magenta}
{\textsc{(Steven:}
\textsf{#1})}}
\def\supp{\hbox{supp\,}}
\begin{document}
\title{On the solvability of systems of equations revisited}
\author{Thang Pham\thanks{University of Science, Vietnam National University, Hanoi. Email: thangpham.math@vnu.edu.vn} ~~~~~~~~~~\and Steven Senger\thanks{Department of Mathematics, Missouri State University. Email: StevenSenger@MissouriState.edu} ~~~~~~~~~~\and Nguyen Trung-Tuan\thanks{University of Science, Vietnam National University, Hanoi. Email: tuan.nguyentrung@gmail.com}\and Nguyen Duc-Thang\thanks{University of Science, Vietnam National University, HCM. Email: ndthanghcmus@gmail.com} \and Le Anh Vinh\thanks{Vietnam Institute of Educational Sciences. Email: vinhle@vnies.edu.vn}}
\maketitle
\begin{abstract}
In this paper, we introduce a new and direct approach to study the solvability of systems of equations generated by bilinear forms. More precisely, let $B (\cdot,
  \cdot)$ be a non-degenerate bilinear form and $E$ be a set in $\mathbb{F}_q^2$. We prove that if $|E|\gg q^{5/3}$ then the number of triples $(B(x, y), B(y, z), B(z, x))$ with $x, y, z\in E$ is at least $cq^3$ for some positive constant $c$. This significantly improves a result due to the fifth listed author (2009). 
\end{abstract}
\section{Introduction}
Let $\mathbb{F}_q$ be a finite field of order $q$, where $q$ is a prime power. Let $B (\cdot,
  \cdot)$ be a non-degenerate bilinear form in $\mathbb{F}_q^d$. Let $1\le t\le k-1$ and $1\le l\le \binom{k}{2}$ be integers. In this paper, we study the solvability of the system of $l$ bilinear equations with $k$ variables $a_1, \ldots, a_k,$
  \begin{equation}\label{mainsystem}B(a_i, a_j)=\lambda_{ij},~1\le i, j\le k,~\lambda_{ij}\in \mathbb{F}_q,\end{equation}
  where each variable appears in at most $t \leq k - 1$ equations.

When $l=1$, Hart and Iosevich \cite{HI}, using discrete Fourier analysis, proved that if $E_1, E_2\subset \mathbb{F}_q^d$ with $|E_1||E_2|\gg q^{d+1}$ then, for any $\lambda\ne 0$, the equation $B(a_1, a_2)=\lambda$ has  $(1+o(1))|E_1||E_2|/q$ solutions $(a_1, a_2)\in E_1\times E_2$. Hart, Iosevich, Koh and Rudnev \cite{HI2} provided a sharp example for this result, namely, for any $\epsilon>0$,
there are sets $E\subset \mathbb{F}_q^d$ with $|E|\sim q^{\frac{d+1}{2}-\epsilon}$ and $|\{x\cdot y\colon x, y\in A\}|=o(q)$.

When $l>1$ and $E_1, \ldots, E_k$ are sets in $\mathbb{F}_q^d$, Vinh \cite{Vinh1}, using graph theoretic methods, proved that if  $|E_i | \gg q^{\frac{d - 1}{2} + t}$ for all $i$, then, for any tuple $(\lambda_{ij})$ with entries in $\mathbb{F}_q^*$, the above system  has
  \[ (1 + o (1)) q^{- l} \prod_{i = 1}^k |E_i | \]
  solutions $(a_1, \ldots, a_k)\in E_1\times \cdots\times E_k$.

If $l=\binom{k}{2}$, $t=k-1$, and $E_1=\cdots=E_k=E$, in another paper \cite{Vinh2} with a refined argument, he showed that under a weaker condition that $|E|\gg q^{\frac{d-1}{2}+\frac{k}{2}}$, the system (\ref{mainsystem}) is solvable for $(1 - o (1))
  q^{\binom{k}{2}}$ possible choices of tuples $(\lambda_{i j})$ in $(\mathbb{F}_q^*)^{\binom{k}{2}}$. In the case $d=2$, $k=3$, and $l=\binom{3}{2}$, he showed that the system is solvable for at least $\gg \left(\frac{|E|}{q^2}\right)\cdot q^3$ triples $(\lambda_1, \lambda_2, \lambda_3)\in (\mathbb{F}_q^*)^3$. Thus, if one wishes to have a lower bound of $\gg q^3$, then the condition $|E|\gg q^2$ is needed. For this type problem, the level of difficulty is increasing when the dimension is getting smaller. Our paper is devoted to study the two dimensional case. Our main theorem is stated as follows. 
\begin{theorem}\label{mainTriangles}
Let $E\subset \mathbb{F}_q^2$. Assume $|E|\gg q^{5/3}$, then 
\[|\{(B(x, y), B(y, z), B(z, x))\colon x, y, z\in E\}|\gg q^3.\]
\end{theorem}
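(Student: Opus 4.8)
The plan is a second--moment reduction followed by an energy estimate, the latter being the substantive part.

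For $\lambda=(\lambda_1,\lambda_2,\lambda_3)\in\mathbb{F}_q^3$ put
\[
f(\lambda)=\#\bigl\{(x,y,z)\in E^3:\ B(x,y)=\lambda_1,\ B(y,z)=\lambda_2,\ B(z,x)=\lambda_3\bigr\},
\]
so that $\sum_{\lambda}f(\lambda)=|E|^3$. Writing $N$ for the number of $\lambda$ with $f(\lambda)>0$ (the quantity we must bound from below), Cauchy--Schwarz gives $|E|^6\le N\,\mathcal{E}$, where
\[
\mathcal{E}:=\sum_\lambda f(\lambda)^2=\#\bigl\{(x,y,z,x',y',z')\in E^6:\ B(x,y)=B(x',y'),\ B(y,z)=B(y',z'),\ B(z,x)=B(z',x')\bigr\}.
\]
Hence it is enough to show $\mathcal{E}\ll |E|^6/q^3$ when $|E|\gg q^{5/3}$; for a ``random'' set $E$ of that size $\mathcal E$ has exactly this order of magnitude, so nothing is lost in principle.

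To estimate $\mathcal E$, I would \emph{pin} the variable $z$: from $f(\lambda)=\sum_{z\in E}\#\{(x,y)\in E^2:\dots\}$ and Cauchy--Schwarz in $z$ one gets $\mathcal{E}\le|E|\sum_{z\in E}T_z$, where
\[
T_z=\#\bigl\{(x,x',y,y')\in E^4:\ B(x,y)=B(x',y'),\ x-x'\in\ell_z,\ y-y'\in m_z\bigr\},
\]
and $\ell_z=\{v:B(z,v)=0\}$, $m_z=\{v:B(v,z)=0\}$ are the two lines through the origin attached to $z\neq0$ (non-degeneracy of $B$ makes them $1$--dimensional). Setting $\mathcal{L}_z=\{(x,x')\in E^2:x-x'\in\ell_z\}$ and $\mathcal{M}_z=\{(y,y')\in E^2:y-y'\in m_z\}$, so that $|\mathcal{L}_z|=\sum_{\ell\parallel\ell_z}|E\cap\ell|^2$ and $|\mathcal{M}_z|=\sum_{m\parallel m_z}|E\cap m|^2$, the number $T_z$ counts the pairs of $\mathcal{L}_z\times\mathcal{M}_z$ annihilated by $\widetilde B\bigl((x,x'),(y,y')\bigr):=B(x,y)-B(x',y')$, a bilinear form on $\mathbb{F}_q^4$ which is again non-degenerate. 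A character--sum expansion then isolates the expected main term,
\[
T_z=\frac1q\,|\mathcal{L}_z|\,|\mathcal{M}_z|+\mathrm{Err}_z,\qquad \mathrm{Err}_z=\frac1q\sum_{s\neq0}\ \sum_{p\in\mathcal{L}_z}\sum_{r\in\mathcal{M}_z}\chi\bigl(s\,\widetilde B(p,r)\bigr),
\]
where $\chi$ is a nontrivial additive character of $\mathbb{F}_q$.

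The heart of the matter is then to prove $\sum_{z\in E}|\mathcal{L}_z||\mathcal{M}_z|\ll|E|^5/q^2$ and $\sum_{z\in E}|\mathrm{Err}_z|\ll|E|^5/q^3$. Both $|\mathcal{L}_z|$ and $|\mathcal{M}_z|$ depend only on the direction of $z$ and are sums $\sum_\ell|E\cap\ell|^2$ over pencils of parallel lines; and expanding $\mathrm{Err}_z$ by writing $y'=y+w$ with $w\in m_z$ converts its inner sum into $\sum_{w\in m_z}\chi(\cdots)\sum_{y\in E\cap(E-w)}\chi\bigl(sB(x-x',y)\bigr)$, a combination of character sums of linear forms over the shifted sets $E\cap(E-w)$, with an analogous manoeuvre for the $\mathcal{L}_z$--side. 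In every case the point sets involved are subsets of $E$ (or of $E\times E$) and the line families are pencils, so the estimates reduce to the Szemer\'edi--Trotter type incidence bound over $\mathbb{F}_q^2$ --- by which at most $\ll q|E|/k^2$ lines meet $E$ in $k$ or more points as soon as $k\gg|E|/q$ --- applied after a dyadic decomposition according to the sizes $|E\cap\ell|$ and $|E\cap(E-w)|$. I expect the genuine obstacle to sit exactly here: the crude square--root--cancellation estimate $|\mathrm{Err}_z|\ll q^{2}\sqrt{|\mathcal{L}_z||\mathcal{M}_z|}$ wastes a whole power of $q$ and would only yield the theorem for $|E|\gg q^2$, so one has to exploit that all the relevant sets live inside $E$, and balancing the incidence error against the main term $|E|^6/q^3$ is precisely what forces $|E|\gg q^{5/3}$. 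Finally, the degenerate pins $z$ (isotropic directions) and the diagonal contributions $x=x'$ or $y=y'$ are handled by a short direct computation; and since only the non-degeneracy of $B$ and of $\widetilde B$ has been used, the argument is uniform in the non-degenerate bilinear form $B$ --- or one may first reduce to the dot product through $B(x,y)=\langle x,My\rangle$, keeping track of the finitely many exceptional directions that $M$ introduces.
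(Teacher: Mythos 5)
Your opening reduction coincides with the paper's: Cauchy--Schwarz gives $|E|^6\le N\,\mathcal{E}$ with $\mathcal{E}=\sum_\lambda f(\lambda)^2$, so everything hinges on showing $\mathcal{E}\ll|E|^6/q^3$ once $|E|\gg q^{5/3}$. The problem is that you do not prove this. The pinning of $z$ (a further Cauchy--Schwarz giving $\mathcal{E}\le|E|\sum_z T_z$) is fine and lossless at the level of main terms, but the two estimates you yourself call ``the heart of the matter'' --- $\sum_z|\mathcal{L}_z||\mathcal{M}_z|\ll|E|^5/q^2$ and $\sum_z|\mathrm{Err}_z|\ll|E|^5/q^3$ --- are asserted, not established. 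As you note, square-root cancellation for the nondegenerate form $\widetilde B$ on $\mathbb{F}_q^4$ gives only $|\mathrm{Err}_z|\ll q^2\sqrt{|\mathcal{L}_z||\mathcal{M}_z|}\approx q|E|^2$, hence $\sum_z|\mathrm{Err}_z|\ll q|E|^3$, which beats $|E|^5/q^3$ only for $|E|\gg q^2$ --- i.e.\ no improvement over the earlier result the theorem is meant to supersede. The proposed repair (Szemer\'edi--Trotter-type incidence bounds over $\mathbb{F}_q$ after a dyadic decomposition in the richnesses $|E\cap\ell|$ and $|E\cap(E-w)|$) is never carried out, and there is no evidence it closes at the exponent $5/3$: the available finite-field incidence bound is itself square-root cancellation in disguise, and extracting an extra power of $q$ here runs into exactly the obstruction the paper's introduction describes (the four-cycle count cannot be pushed below $|E|\sim q^2$ by such means). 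Even your main term is not automatic: $|\mathcal{L}_z|=\sum_{\ell\parallel\ell_z}|E\cap\ell|^2$ can be as large as $q|E|$ when $E$ concentrates on a pencil of parallel lines, and no argument is given that the sum over $z\in E$ of the product still comes out to $|E|^5/q^2$.

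For comparison, the paper closes this gap by a different decomposition. It first partitions $E$ into three pieces $E_1,E_2,E_3$ of comparable size supported on disjoint unions of lines through the origin, so that for $x$ and $y$ in different pieces the level sets $\ell_\lambda(x)$ and $\ell_\mu(y)$ intersect in at most one point; it then expands $\sum f^2$ in additive characters and splits according to which of $s_1,s_2,s_3$ vanish, bounding each piece using that one-point-intersection fact together with known inputs (the $L^2$ dot-product energy bound, the single-dot-product count, and a path-counting lemma). The resulting estimate $\sum f^2\ll|E|^6/q^3+q^2|E|^3+|E|^4$ is where the threshold $q^{5/3}$ actually originates (from the term $q^2|E|^3$). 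Your sketch identifies the correct quantity to bound but leaves its estimation --- the entire substance of the theorem --- open.
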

From the sharpness construction in \cite{HI2} for the case $k=2$, we are led to the following conjecture. 
\begin{conjecture}
   Let $E\subset \mathbb{F}_q^2$. Assume $|E|\gg q^{3/2}$, then 
\[|\{(B(x, y), B(y, z), B(z, x))\colon x, y, z\in E\}|\gg q^3.\]
\end{conjecture}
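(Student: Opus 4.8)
The plan is to run a second–moment (Cauchy--Schwarz) argument and reduce the statement to a single character–sum estimate. Let $r(\lambda)$ be the number of triples $(x,y,z)\in E^3$ with $(B(x,y),B(y,z),B(z,x))=\lambda$, and let $N$ be the number of distinct such triples. Cauchy--Schwarz gives $N\ge (\sum_\lambda r(\lambda))^2/\sum_\lambda r(\lambda)^2=|E|^6/Q$ with $Q=\sum_\lambda r(\lambda)^2$, so it suffices to prove $Q\ll |E|^6/q^3$. Fixing a nontrivial additive character $\chi$ of $\mathbb{F}_q$ and opening each of the three equality constraints, one gets
\[ Q=\frac{|E|^6}{q^3}+\frac{1}{q^3}\sum_{(s,t,u)\neq(0,0,0)}\big|T(s,t,u)\big|^2,\qquad T(s,t,u)=\sum_{x,y,z\in E}\chi\big(sB(x,y)+tB(y,z)+uB(z,x)\big). \]
The first term is exactly the target main term, so the whole problem reduces to the off–diagonal bound $\sum_{(s,t,u)\neq(0,0,0)}|T(s,t,u)|^2\ll |E|^6$.

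First I would dispose of the low–rank frequencies. When at most one of $s,t,u$ is nonzero, say $s=t=0$ and $u\neq 0$, the variable $y$ is free and $T(0,0,u)=|E|\sum_{x,z\in E}\chi(uB(z,x))$; summing the square over $u\neq0$ reconstructs the bilinear energy $\Lambda=\#\{(x,z,x',z')\in E^4:\ B(z,x)=B(z',x')\}$ via $\sum_{u\neq0}|\sum_{x,z}\chi(uB(z,x))|^2=q\Lambda-|E|^4$. The case $l=1$ of Hart--Iosevich quoted above gives $\#\{(x,z):B(z,x)=\lambda\}=(1+o(1))|E|^2/q$ for $\lambda\neq0$ once $|E|\gg q^{3/2}$, whence $\Lambda\ll|E|^4/q$ (after pruning the controllable $\lambda=0$ fiber using non‑degeneracy of $B$). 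Hence each single–frequency contribution is $\ll|E|^2\cdot q\Lambda\ll|E|^6$, which is within budget \emph{exactly} at the threshold $|E|\gg q^{3/2}$.

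The remaining terms, where two or three of $s,t,u$ are nonzero, are the genuine obstacle. For symmetric $B$, summing over $y$ yields $T(s,t,u)=\sum_{x,z\in E}\widehat{E}(sx+tz)\,\chi(uB(z,x))$ with $\widehat E(w)=\sum_{y\in E}\chi(B(y,w))$: a bilinear phase twisted by a Fourier coefficient at a linear form. Expanding the $\ell^2$ mass of the two–nonzero terms reduces matters to the second moment of the fiber–pair counts $P(z,z')=\#\{(y,y')\in E^2:\ B(y,z)=B(y',z')\}$, and one checks that closing the argument requires $\sum_{z,z'}P(z,z')^2\ll|E|^6/q^2$. This holds for the typical pair $(z,z')$, where $P(z,z')\approx|E|^2/q$, but it fails for the aligned pairs $z'\parallel z$, where the two functionals coincide and $P(z,z')$ can be as large as $|E|^2$. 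This rich/degenerate part is precisely the factor that the bilinear energy bound of the previous step cannot absorb, and it is what forces the current proof of Theorem~\ref{mainTriangles} up to $|E|\gg q^{5/3}$.

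To push the threshold down to $|E|\gg q^{3/2}$ the natural route is to stop treating $T(s,t,u)$ by plane‑incidence/Fourier and instead read the twisted bilinear sum as an incidence count between points and planes in $\mathbb{F}_q^3$: the term $uB(z,x)$ together with the linear data $sx+tz$ encodes each pair $(x,z)$ as a point and each frequency triple as a plane, and one applies Rudnev's point–plane incidence theorem. The hard part, and the reason the statement is only a conjecture, is to bound the degenerate contribution of this incidence problem — the $(s,t,u)$ for which $E$ concentrates on few level sets of the forms, equivalently the rich collinear configurations in Rudnev's bound (the aligned pairs above) — by $\ll|E|^6$ all the way down to $|E|\gg q^{3/2}$. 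It is exactly this collinear/degenerate regime that the known estimates only control above $q^{5/3}$, so any proof of the conjecture must supply a new input (sharper handling of the aligned pairs, or additional multiplicative structure of the value sets) precisely there.
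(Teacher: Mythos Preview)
The statement you are attempting is not a theorem but a \emph{conjecture}; the paper does not prove it, and neither do you. Your write-up is an honest diagnostic of the second-moment approach rather than a proof, and you say so explicitly in the last paragraph. So there is no proof to compare against, only heuristics.

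That said, your analysis tracks the paper's method for the weaker result (Theorem~\ref{mainTriangles}, threshold $q^{5/3}$) quite closely. The paper also runs Cauchy--Schwarz on a triangle-counting function, expands $\sum f^2$ via characters, peels off the main term $|E|^6/q^3$, and splits the remainder according to which of $s_1,s_2,s_3$ vanish. The term you call ``low-rank'' (one nonzero frequency) is the paper's $II_1$, bounded by the bilinear energy estimate; the two-nonzero case is the paper's $II_2$; and the three-nonzero case is the paper's $I$. The paper handles the collinearity issue you flag by first partitioning $E$ into $E_1,E_2,E_3$ supported on disjoint pencils of lines through the origin, which forces $|\ell_\lambda(x')\cap\ell_\mu(y')|\le 1$ and kills the aligned-pair blow-up inside $I$. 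The bottleneck then migrates to $I\ll q^2|E|^3$, and requiring $q^2|E|^3\ll |E|^6/q^3$ is exactly what pins the exponent at $5/3$.

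Two technical remarks on your sketch. First, your treatment of the single-frequency term glosses over the $\lambda=0$ fiber: the zero-dot-product count can genuinely be $\gg |E|^2/q$ (e.g.\ when $E$ lies on few lines through the origin), so ``pruning'' it is not automatic and the paper handles this separately in higher dimensions. Second, invoking Rudnev's point--plane theorem is natural but comes with its own collinearity hypothesis, and the degenerate configurations you would need to control are the same aligned pairs you already identified; it is not clear the incidence route actually gains anything here. In short: your diagnosis of the obstruction is accurate, but nothing in the proposal closes the gap between $5/3$ and $3/2$, which is consistent with the statement's status as open.
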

Extend the method to higher dimensions, we obtain the following. 

\begin{theorem}\label{higher-dimension}
    Let $E\subset \mathbb{F}_q^d$. Assume that $|E|\gg q^{d-\frac{d-1}{3}}$, then 
    \[|\{(B(x, y), B(y, z), B(z, x))\colon x, y, z\in E\}|\gg q^3.\]
\end{theorem}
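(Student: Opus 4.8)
The plan is to push the second‑moment argument behind Theorem~\ref{mainTriangles} into $\mathbb F_q^d$, the dimension entering only through the Parseval identity $\sum_{\xi}|\widehat E(\xi)|^2=q^d|E|$ on $\mathbb F_q^d$. Write $T$ for the set of triples in the statement and, for $\uple\lambda=(\lambda_1,\lambda_2,\lambda_3)\in\mathbb F_q^3$, let $N(\uple\lambda)$ be the number of $(x,y,z)\in E^3$ with $B(x,y)=\lambda_1,\ B(y,z)=\lambda_2,\ B(z,x)=\lambda_3$. Since $\sum_{\uple\lambda}N(\uple\lambda)=|E|^3$, Cauchy–Schwarz gives $|E|^6\le |T|\sum_{\uple\lambda}N(\uple\lambda)^2$, so it suffices to prove
\[
\sum_{\uple\lambda\in\mathbb F_q^3}N(\uple\lambda)^2\;\ll\;\frac{|E|^6}{q^3}\qquad\text{whenever }|E|\gg q^{\,d-\frac{d-1}{3}} .
\]
The left side counts sextuples $(x,y,z,x',y',z')\in E^6$ with $B(x,y)=B(x',y')$, $B(y,z)=B(y',z')$, $B(z,x)=B(z',x')$; since each identity heuristically costs a factor $q$, the right side is the expected size and the task is to show the degenerate sextuples are no worse.

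Write $B(a,b)=a^{\mathsf T}Mb$ with $M$ invertible, and condition on the pair $(y,y')\in E^2$ (we may assume $0\notin E$, so $y,y'\neq 0$). The identities $B(x,y)=B(x',y')$ and $B(y,z)=B(y',z')$ place $(x,x')$ and $(z,z')$ on affine hyperplanes $V_1,V_2\subset\mathbb F_q^{2d}$, while the last identity $B(z,x)=B(z',x')$ reads $\Psi\bigl((x,x'),(z,z')\bigr)=0$, where $\Psi\bigl((x,x'),(z,z')\bigr)=z^{\mathsf T}Mx-z'^{\mathsf T}Mx'$ is a \emph{non‑degenerate} bilinear form on $\mathbb F_q^{2d}\times\mathbb F_q^{2d}$ that remains non‑degenerate on the linear parts of $V_1,V_2$ for every $(y,y')$ off the quadric $B(M^{-1}M^{\mathsf T}y,y)=B(M^{-1}M^{\mathsf T}y',y')$ (for symmetric $B$ this is $B(y,y)=B(y',y')$). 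For such ``generic'' $(y,y')$ one applies the $\ell=1$ estimate of Hart–Iosevich \cite{HI} in the $(2d-1)$‑dimensional ambient space to get
\[
A(y,y'):=\#\{(x,x',z,z')\in E^4:\text{all three identities}\}=\frac{|E^2\cap V_1|\,|E^2\cap V_2|}{q}+O\!\bigl(q^{\,d-\frac12}(|E^2\cap V_1|\,|E^2\cap V_2|)^{1/2}\bigr).
\]

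Summing the main term over all $y,y'$ gives $q^{-1}\#\{(x,x',z,z',y,y')\in E^6:B(x,y)=B(x',y'),\ B(y,z)=B(y',z')\}$, and a second Fourier expansion (plus Parseval on $\mathbb F_q^d$, giving $\#\{(x,y,x',y')\in E^4:B(x,y)=B(x',y')\}\ll|E|^4/q$) shows this ``path energy'' is $\ll|E|^6/q^2$, so its $q^{-1}$ multiple is $\ll|E|^6/q^3$ as required. For the error term, the crude bound $\sum_{y,y'}(|E^2\cap V_1|\,|E^2\cap V_2|)^{1/2}\le\#\{(x,y,x',y')\in E^4:B(x,y)=B(x',y')\}\ll|E|^4/q$ yields a total error $\ll q^{\,d-3/2}|E|^4$, which is $\ll|E|^6/q^3$ once $|E|\gg q^{(2d+3)/4}$; since $(2d+3)/4\le (2d+1)/3$ precisely for $d\ge 3$, this already settles the theorem in those dimensions.

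The main obstacle is twofold, and it is exactly why the case $d=2$ of Theorem~\ref{mainTriangles} is the hard one. First, the pairs $(y,y')$ on the ``diagonal quadric'', where $\Psi$ drops rank, must be handled separately: there are only $\ll q^{\,d-1}|E|$ of them, but the $\ell=1$ black box fails, so one must expand in Fourier and carefully retain every membership condition (in particular ``$y'\in E$'', which should recover the missing factor $|E|/q^d$) rather than bound crudely — losing that factor would force $|E|\gg q^2$. Second, for $d=2$ the pair‑by‑pair triangle inequality used on the $\ell=1$ error term above gives only $|E|\gg q^{7/4}$, so reaching $q^{5/3}$ requires keeping $\sum_{y,y'}(\text{error at }(y,y'))$ as a single oscillatory sum and exploiting cancellation across $(y,y')$ — equivalently, an $L^4$/restriction‑type estimate for $B$ on $E$, of the kind that underlies sum–product bounds over $\mathbb F_q$. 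I expect that last estimate to be the genuinely delicate ingredient; everything else should be careful but routine Fourier bookkeeping.
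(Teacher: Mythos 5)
Your reduction to $\sum_{\uple\lambda}N(\uple\lambda)^2\ll|E|^6/q^3$ is the same opening move as the paper's, but from there you take a genuinely different route: instead of the paper's partition of $E$ into three pieces supported on disjoint pencils of lines through the origin, followed by the character-sum decomposition into $I$, $II_1$, $II_2$ and a case analysis on which dot products vanish, you condition on $(y,y')$ and recast the remaining four-point count as a single Hart--Iosevich estimate for the form $\Psi$ restricted to the hyperplanes $V_1\times V_2$ inside $\mathbb F_q^{2d-1}\times\mathbb F_q^{2d-1}$. The generic part of this is sound and is arguably cleaner than the paper's argument: the summed main term is controlled by the path energy $\#\{(x,x',y,y',z,z')\in E^6: B(x,y)=B(x',y'),\,B(y,z)=B(y',z')\}$, which is indeed $\ll|E|^6/q^2$ once $|E|\gg q^{(d+1)/2}$ (expanding both constraints in characters, the mixed term equals $q^{4d-2}\sum_{s,t\neq 0}\bigl|\sum_{y\in E}\widehat{1_E}(-sy)\widehat{1_E}(-ty)\bigr|^2\le q^{2d}|E|^2$ by Plancherel), and your error-term bookkeeping correctly lands at the threshold $q^{(2d+3)/4}$, which is below $q^{(2d+1)/3}$ exactly for $d\ge 3$.

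The genuine gap is the set of degenerate pairs $(y,y')$ on the quadric $B(M^{-1}M^{\mathsf T}y,y)=B(M^{-1}M^{\mathsf T}y',y')$, which you flag but do not close, and closing it is not routine bookkeeping. For such pairs $\Psi|_{V_1\times V_2}$ acquires a one-dimensional radical, so the hyperplanes $\{\Psi(\cdot,(z,z'))=0\}$ all contain a common line in $V_1$ and $A(y,y')$ becomes an incidence count that can concentrate. Crude estimates fail by polynomial factors: bounding $A(y,y')\le|E^2\cap V_1|\cdot\max_{(x,x')}\#\{(z,z')\in E^2\cap V_2:\Psi((x,x'),(z,z'))=0\}$, using the trivial fiber bound $q^{2d-2}$ and $\sum_{y,y'}|E^2\cap V_1|\ll|E|^4/q$, gives $q^{2d-3}|E|^4$, which is $\ll|E|^6/q^3$ only when $|E|\gg q^{d}$ --- far above the claimed threshold. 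Recovering the ``missing factor $|E|/q^{d}$'' from the membership condition $y'\in E$, as you propose, is precisely where the real content of the theorem lives: it is the analogue of the paper's sets $B$ and $C$ (the zero-dot-product/rank-drop configurations), which is the part that forces the paper to treat $d=3$ and $d\ge 4$ separately and to bring in the weighted $L^2$ estimate and the zero-dot-product count (Lemmas \ref{cococo} and \ref{zero-product}). Until that case is actually carried out, the proof is incomplete; and since by your own accounting the method stalls at $q^{7/4}$ when $d=2$, the two-dimensional endpoint $q^{5/3}$ is also not reached by this route.
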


\paragraph{Discussions on related results:} There are several results in the literature when the bilinear form is replaced by other functions. If we define $F(a_i, a_j):=||a_i-a_j||=(a_i^1-a_j^1)^2+\cdots+(a_i^d-a_j^d)^2$, then the solvability of the system \ref{mainsystem} has been studied intensively. Bennett, Hart, Iosevich, Pakianathan, and Rudnev \cite{BHI} established that if $k\le d$, $l=\binom{k}{2}$, and $t=k-1$ then the condition $|E|\gg q^{d-\frac{d-1}{k}}$ is sufficient to conclude that the system (\ref{mainsystem}) is solvable for $(1 - o (1))
  q^{\binom{k}{2}}$ possible choices of tuples $(\lambda_{i j})$ in $(\mathbb{F}_q^*)^{\binom{k}{2}}$. In two dimensions, they also showed that the exponent $5/3$ can be decreased to $8/5$. 

\paragraph{Discussions on the technique:} It is not possible to use the methods developed in \cite{BHI, Vinh2} to prove Theorem \ref{mainTriangles}. Let us try to explain in detail. For a function $F$, the method used in \cite{BHI} is adaptable when the following is satisfied: $F(a, b)=F(u, v)$ if and only if there exists a group $G$ such that $a=gu$ and $b=gv$ for some $g\in G$. For example, if $F(x, y)=x\cdot y^\perp$, then $G$ can be chosen as the group $SL_2(\mathbb{F}_q)$. However, in the case of the dot product function, we cannot find such a group. One more remark we want to add here is that the set $\{(x\cdot y, y\cdot z, z\cdot x)\colon x, y, z\in E\}$ is not the same as the set $\{(x\cdot y^\perp, y\cdot z^\perp, z\cdot x^\perp)\colon x, y, z\in E\}$ in general, and we do not know if there exists a map $\psi\colon \mathbb{F}_q^3\to \mathbb{F}_q^3$ such that it is (almost) size-invariant between these two sets. 

We now discuss the method developed in \cite{Vinh2}. To prove Theorem \ref{mainTriangles} with this approach, the most challenging problem is to show that the number of tuples $(a_1, a_2, a_3, a_4)\in E^4$ such that $B(a_1, a_3)=B(a_1, a_4)=B(a_2, a_3)=B(a_2, a_4)=1$ (cycles of length four) is at most $|E|^4/q^4$ whenever $|E|\gg q^{3/2}$. Assume $B(a_i, a_j)=a_i\cdot a_j$ for simplicity, then it is not difficult to see that for each such tuple we have either $a_1=a_2$ or $a_3=a_4$, otherwise, there are two lines intersect at two points. However, we know that the number of such tuples is $(1+o(1))|E|^3/q^2$ under $|E|\gg q^{3/2}$ (see \cite{mot, hai}), and $|E|^3/q^2$ is smaller than $|E|^4/q^4$ only if $|E|>q^2$. 

In this paper, we introduce a direct approach to bound the $L^2$-norm of \textit{the counting function} from above, and it is worth noting that our argument also gives the same exponent for other functions, for example, diagonal polynomials and norm functions, which are not possible if we apply the methods in \cite{BHI, Vinh2}. This raises a question of clarifying all functions $F$ such that Theorem \ref{higher-dimension} holds when $B(\cdot, \cdot)$ is replaced by $F(\cdot, \cdot)$. When $k\ge 4$, there are some technical issues that will be discussed in detail in the last section.

\paragraph{Some remarks in the continuous setting:}
In the continuous setting, Eswarathasan, Iosevich, and Taylor \cite{EIT} proved that for a compact set $E\subset \mathbb{R}^d$ if the Hausdorff dimension of $E$ is at least $\frac{d+1}{2}$, then the set $\{B(x, y)\colon x, y\in E\}$ has positive Lebesgue measure. This is directly in line with the finite field analog of Hart and Iosevich in \cite{HI}. Based on Theorem \ref{mainTriangles}, it might be reasonable to make the conjecture that for a compact set $E\subset \mathbb{R}^d$ if the Hausdorff dimension of $E$ is at least $d-\frac{d-1}{k}$, then the Lebesgue measure of the set $\{(B(x_i, x_j))_{1\le i<j\le k}\colon x_i, x_j\in E\}$ is positive. So, in two dimensions with $k=3$, the dimensional threshold is expected to be $5/3$. We plan to address this conjecture in a subsequent paper. 

\paragraph{Notations:} Throughout this paper, by $X\gg Y$, we mean there exists a positive constant $C$, independent of $q$, such that $X\ge CY$, $X=o(Y)$ if $X/Y\to 0$ as $q\to \infty$, and $X\sim Y$ if $X\ll Y\ll X$.
\section{Proof of Theorem \ref{mainTriangles}}
In our approach, there is no harm to assume that $B(x, y)=x\cdot y$. 

We begin by stating the main lemma and using it to prove Theorem \ref{mainTriangles}. Let $E_1, E_2,$ and $E_3$ be three subsets of $E$ with the properties that $|E_i|\gg |E|$ and there is no line passing through the origin and intersecting any pair of the $E_i$. That is, each of the $E_i$ is supported on a set of lines through the origin that is disjoint from $E_j$ for $j\neq i$. 

We now state and prove an $L^2$-estimate specifically for triangles. Define a triangle counting function
\[f(\lambda_1,\lambda_2,\lambda_3):=|\{(x,y,z)\in E_1\times E_2\times E_3:x\cdot y = \lambda_1, y\cdot z = \lambda_2, z \cdot x = \lambda_3\}|,\]
for $\lambda_1, \lambda_2, \lambda_3\in \mathbb{F}_q$.

\begin{lemma}\label{L2bound}
    Given $E\subset \mathbb F_q^2,$ with $|E|\gg q^\frac{5}{3},$ partitioned into three disjoint subsets, $E_1, E_2,$ and $E_3,$ each with size $|E_j|\gg|E|,$ each supported on distinct lines through the origin, we have that
    \[\sum_{\lambda_1, \lambda_2, \lambda_3\in\mathbb F_q}f^2(\lambda_1, \lambda_2, \lambda_3)\ll \frac{|E|^6}{q^3}+q^2|E|^3+|E|^4.\]
\end{lemma}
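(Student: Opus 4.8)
# Proof Plan for Lemma \ref{L2bound}

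**Setting up the count.** The quantity $\sum_{\lambda_1,\lambda_2,\lambda_3} f^2(\lambda_1,\lambda_2,\lambda_3)$ counts the number of pairs of triangles $(x,y,z)$ and $(x',y',z')$ in $E_1\times E_2\times E_3$ sharing the same triple of dot products, i.e.
\[
N := \#\bigl\{(x,x',y,y',z,z')\in E_1^2\times E_2^2\times E_3^2 : x\cdot y = x'\cdot y',\ y\cdot z = y'\cdot z',\ z\cdot x = z'\cdot x'\bigr\}.
\]
My plan is to run a Cauchy–Schwarz / pigeonhole argument that first fixes the pair $(y,y')$, then counts the compatible $(x,x')$ and $(z,z')$ separately, exploiting that the two conditions involving $x,x'$ and the two involving $z,z'$ decouple once $y,y'$ are fixed. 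Concretely, for fixed $(y,y')$ let
\[
g_{y,y'}(x,x') := \mathbf{1}[x\cdot y = x'\cdot y'], \qquad h_{y,y'}(z,z') := \mathbf{1}[y\cdot z = y'\cdot z'],
\]
so that $N = \sum_{y,y'\in E_2} \bigl(\sum_{x,x'} g_{y,y'}\,\mathbf{1}[z\cdot x = z'\cdot x']\bigr)$ — but the $z\cdot x = z'\cdot x'$ condition couples the two blocks, so a cleaner route is to fix $(y,y')$ and $(z,z')$, count $x,x'$ satisfying the two linear conditions $x\cdot y = x'\cdot y'$ and $z\cdot x = z'\cdot x'$, and then sum over $(y,y')$ and $(z,z')$ with a further application of Cauchy–Schwarz in those variables.

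**The inner linear count.** With $(y,y')$ and $(z,z')$ fixed, the set of $(x,x')\in E_1\times E_1$ with $x\cdot y = x'\cdot y'$ and $x\cdot z = x'\cdot z'$ is governed by the $2\times 2$ linear system in the unknowns $x,x'$; the generic fiber has size $\sim |E_1|^2/q^2$, but degenerate cases arise when $y,z$ (or $y',z'$) are parallel, or when the map $x\mapsto (x\cdot y, x\cdot z)$ is singular. This is exactly where the hypothesis that $E_1,E_2,E_3$ lie on disjoint sets of lines through the origin is used: it rules out various coincidences (e.g.\ $x$ parallel to $y$, or $y$ parallel to $y'$ forcing collapse). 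The plan is to split $N$ into a "generic" main term, which after summing over all of $E_2^2\times E_3^2$ contributes $O(|E|^6/q^3)$, plus "degenerate" contributions where one of the pairs among $\{x,x'\},\{y,y'\},\{z,z'\}$ is parallel or equal, or where a relevant determinant vanishes. Each degenerate stratum loses one or two degrees of freedom, and I expect the degenerate terms to be bounded by $O(q^2|E|^3 + |E|^4)$: the $|E|^4$ term coming from strata where two of the three pairs are forced to be related (e.g.\ $x=x'$ and $y=y'$, leaving essentially $|E|^3$ free choices times a $q$ factor — need to track this carefully), and the $q^2|E|^3$ term from a stratum with three free points and one free field parameter squared, or equivalently from summing a $1/q$-density fiber over a configuration space of size $q^2|E|^3$.

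**Organizing the case analysis.** Concretely I would: (i) write $N = \sum_{y,y',z,z'} T(y,y',z,z')$ where $T$ is the number of valid $(x,x')$; (ii) partition the $(y,y',z,z')$-sum according to whether $\det$ of the relevant coefficient matrix is zero — on the non-degenerate part bound $T \ll |E_1|^2/q^2$ uniformly (using that $E_1$ meets each line through the origin in $O(1)$ points? — no, rather using an averaging/orthogonality bound, since a pointwise bound is false; so in fact I would sum $T$ over the non-degenerate locus directly via a change of variables, getting $\ll |E_1|^2 |E_2|^2 |E_3|^2/q^3 \ll |E|^6/q^3$); (iii) on the degenerate locus, where the coefficient matrix drops rank, parametrize the solutions: typically $x$ and $x'$ are constrained to single lines or the condition forces $y\parallel z$ etc., and bound the contribution by brute force counting, using $|E_i|\ll |E|$ and the line-disjointness to control intersections, arriving at $q^2|E|^3 + |E|^4$.

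**Main obstacle.** The hard part will be the bookkeeping of the degenerate strata: there are several ways the $2\times 2$ system can become singular (coefficient matrix singular vs.\ inconsistent, and on the $x'$ side independently), they interact with the parallelism conditions on $(y,y')$ and $(z,z')$, and one must check that the line-disjointness hypothesis really does kill the worst cases and that no stratum secretly contributes more than $q^2|E|^3$. I expect that verifying the bound $|E|\gg q^{5/3}$ is exactly what makes the $|E|^6/q^3$ term dominate (so that the stated three-term bound is genuinely useful downstream) and that tracking the role of the "distinct lines through the origin" assumption in each degenerate case is the crux.
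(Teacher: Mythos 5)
Your reduction of $\sum f^2$ to the six-tuple count $N$ is correct, and the overall shape of the answer (a main term $|E|^6/q^3$ plus degenerate strata) is the right target. But the plan has a genuine gap at its central step: the claim that on the non-degenerate locus one can ``sum $T$ over \dots via a change of variables, getting $\ll |E|^6/q^3$.'' Fix $(y,y',z,z')$ with $\{y,z\}$ and $\{y',z'\}$ each independent, and let $M,M'$ be the matrices with rows $y,z$ and $y',z'$; then $T(y,y',z,z')=|\{x'\in E_1: M^{-1}M'x'\in E_1\}|=|E_1\cap gE_1|$ with $g=M^{-1}M'\in GL_2(\mathbb{F}_q)$. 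The change of variables $(\alpha,\beta)=(x\cdot y,x\cdot z)$ plus Cauchy--Schwarz gives only the pointwise bound $T\le |E_1|$, and summing that over the $\sim |E|^4/q$ quadruples satisfying the remaining constraint $y\cdot z=y'\cdot z'$ yields $|E|^5/q$, which exceeds $|E|^6/q^3$ throughout the relevant range $q^{5/3}\ll|E|\ll q^2$. To do better you must control $\sum_g w(g)\,|E_1\cap gE_1|$, where $w(g)$ counts the admissible quadruples producing a given $g$ --- i.e.\ exactly the group-action energy that the paper's introduction identifies as unavailable for the dot product (unlike for $x\cdot y^{\perp}$, where $SL_2$ acts transitively on level sets). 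So the ``generic'' stratum, which you treat as the routine part, is in fact the crux, and no mechanism is supplied for it.

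The paper sidesteps this entirely: it never counts six-tuples satisfying all three equations simultaneously. Writing $f$ as a character sum over $(s_1,s_2,s_3)$ and splitting according to which $s_i$ vanish, every resulting piece requires only (a) the $L^2$ dot-product energy $|\{x\cdot y=x'\cdot y'\}|$ (Lemma \ref{coco} and Corollary \ref{cor23}), and (b) counts of six-tuples satisfying at most \emph{two} of the three equations, which are handled by the path estimate (Lemma \ref{star}) when the common values are nonzero and by the line-disjointness of $E_1,E_2,E_3$ when they are zero; this is where the secondary terms $q^2|E|^3$ and $|E|^4$ actually arise (your attribution of them to strata like $x=x'$, $y=y'$ is not where they come from). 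If you want to pursue your decomposition you would need a genuinely new estimate for $\sum_g w(g)|E_1\cap gE_1|$; as written, the argument does not close.
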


The proof of this lemma is given in Section \ref{sbb} below. For now, we quickly prove Theorem \ref{mainTriangles} using Lemma \ref{L2bound}. We use the notation $1_{\lambda_1, \lambda_2, \lambda_3}$ to denote the function that is one when $f(\lambda_1, \lambda_2, \lambda_3)>0$ and zero otherwise. To do this, we first appeal to the size conditions on the sets $E_i,$ then apply the Cauchy-Schwarz inequality

\begin{align*}
|E|^6&\ll \left(\sum_{\lambda_1, \lambda_2, \lambda_3\in\mathbb F_q}f(\lambda_1, \lambda_2,\lambda_3)\right)^2\leq \left(\sum_{\lambda_1, \lambda_2, \lambda_3\in\mathbb F_q}1_{\lambda_1,\lambda_2,\lambda_3}^2\right)\left(\sum_{\lambda_1, \lambda_2, \lambda_3\in\mathbb F_q}f^2(\lambda_1, \lambda_2,\lambda_3)\right)\\
&\ll |\supp(f)|\cdot\left( \frac{|E|^6}{q^3} +q^2|E|^3+ |E|^4\right).
\end{align*}
Since $|E|\gg q^{5/3}$, we obtain
\[|\supp(f)|\gg\frac{|E|^6}{q^{-3}|E|^6} \gg q^3,\]
which completes the proof.

\section{Proof of the main lemma (Lemma \ref{L2bound})}\label{sbb}
We first recall some results in the literature about the case $k=2$.
\begin{lemma}[\cite{HI}]\label{cor231}
Let $E$ and $F$ be two sets in $\mathbb{F}_q^d$ and let $\lambda\in \mathbb{F}_q^*$. Let $N_\lambda(E, F)$ be the number of pairs $(x, y)\in E\times F$ such that $x\cdot y=\lambda$. 
Then we have 
\[\left\vert N_{\lambda}(E, F)-\frac{|E||F|}{q}\right\vert \ll q^{\frac{d-1}{2}}|E|^{1/2}|F|^{1/2}.\]
\end{lemma}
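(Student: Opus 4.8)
The plan is to prove this by discrete Fourier analysis on $\mathbb{F}_q^d$, the standard route for such bilinear counting estimates. Fix a nontrivial additive character $\chi$ of $\mathbb{F}_q$ and write $N_\lambda(E,F)=\sum_{x\in E,\,y\in F}\mathbf{1}[x\cdot y=\lambda]$. First I would expand the indicator via orthogonality, $\mathbf{1}[x\cdot y=\lambda]=\frac1q\sum_{t\in\mathbb{F}_q}\chi(t(x\cdot y-\lambda))$, and isolate the $t=0$ contribution. This produces the main term $|E||F|/q$ exactly, leaving the error term $R=N_\lambda(E,F)-\frac{|E||F|}{q}=\frac1q\sum_{t\neq 0}\chi(-t\lambda)\sum_{x\in E,\,y\in F}\chi(t\,x\cdot y)$, which I must bound by $\ll q^{(d-1)/2}|E|^{1/2}|F|^{1/2}$.

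Next I would introduce the Fourier transform $\widehat F(m)=\sum_y F(y)\chi(-m\cdot y)$, so that the inner $y$-sum is exactly $\widehat F(-tx)$ and hence $qR=\sum_{x\in E}R_x$ with $R_x:=\sum_{t\neq 0}\chi(-t\lambda)\widehat F(-tx)$. Applying Cauchy--Schwarz over $x\in E$ gives $q|R|\le |E|^{1/2}\big(\sum_{x\in E}|R_x|^2\big)^{1/2}$, and since all summands are nonnegative I may enlarge the range to all of $\mathbb{F}_q^d$. Thus it suffices to show $\sum_{x\in\mathbb{F}_q^d}|R_x|^2\le q^{d+1}|F|$, which would yield $|R|\le q^{-1}|E|^{1/2}(q^{d+1}|F|)^{1/2}=q^{(d-1)/2}|E|^{1/2}|F|^{1/2}$.

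The crucial step is to evaluate $\sum_x|R_x|^2$ exactly. I would expand the square, perform the $x$-summation first using $\sum_x\chi(x\cdot w)=q^d\mathbf{1}[w=0]$, and reduce to a double sum over $t,t'\neq 0$ weighted by $\nu(t/t'):=|\{y\in F:(t/t')y\in F\}|$. Setting $s=t/t'$ collapses this to $q^d\sum_{s\neq 0}\nu(s)\sum_{t'\neq 0}\chi(t'(1-s)\lambda)$, and here the hypothesis $\lambda\neq 0$ is decisive: the inner character sum equals $q-1$ when $s=1$ and $-1$ otherwise, so the whole expression telescopes to $q^d\big[(q-1)|F|-\sum_{s\neq 0,1}\nu(s)\big]\le q^{d+1}|F|$, the discarded term being nonnegative. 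Combining with the previous paragraph completes the estimate.

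I expect the main obstacle to be securing the full $q^{1/2}$-saving that distinguishes the sharp exponent $q^{(d-1)/2}$ from the $q^{d/2}$ one would obtain by a cruder argument, e.g. bounding each $t$-term by the operator norm $q^{d/2}$ of the (unitary up to scaling) matrix $[\chi(t\,x\cdot y)]_{x,y}$ and summing $q-1$ such terms. The extra saving is genuinely arithmetic: it comes from retaining the oscillatory factor $\chi(-t\lambda)$ with $\lambda\neq 0$ and exploiting cancellation in the $t$-variable through the exact evaluation above, rather than passing to absolute values prematurely. Getting the bookkeeping of the $t,t'$ double sum right, and recognizing that the negative term may simply be dropped, is the delicate part; everything else is routine orthogonality.
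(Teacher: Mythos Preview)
Your argument is correct and complete: the orthogonality expansion, Cauchy--Schwarz in $x$, and the exact evaluation of $\sum_x |R_x|^2$ via the substitution $s=t/t'$ all go through as you describe, and the crucial use of $\lambda\neq 0$ to extract the extra $q^{1/2}$ saving is precisely the point. The paper does not give its own proof of this lemma but simply cites Hart--Iosevich \cite{HI}; your write-up is essentially the standard Fourier-analytic proof from that reference.
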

\begin{lemma}[\cite{HI}]\label{coco}
Let $E$ and $F$ be two sets in $\mathbb{F}_q^d$. Let $N$ be the number of tuples $(x_1, x_2, y_1, y_2)\in E\times E\times F\times F$ such that $x_1\cdot y_1=x_2\cdot y_2$. Then
\[\left\vert N-\frac{|E|^2|F|^2}{q}\right\vert \ll q^d|E||F|.\]
\end{lemma}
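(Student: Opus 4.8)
The statement is the standard Fourier/character-sum ``energy'' estimate of Hart and Iosevich, and the plan is to prove it directly by orthogonality of additive characters together with a single Cauchy--Schwarz step.

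First I would reorganize the count. For $\lambda \in \mathbb{F}_q$ set $N_\lambda := |\{(x,y)\in E\times F : x\cdot y = \lambda\}|$, so that grouping the tuples $(x_1,x_2,y_1,y_2)$ by the common value $\lambda = x_1\cdot y_1 = x_2\cdot y_2$ gives the clean identity $N = \sum_{\lambda\in\mathbb{F}_q} N_\lambda^2$. Since $\sum_\lambda N_\lambda = |E||F|$, the main term $|E|^2|F|^2/q$ is exactly the uniform guess $N_\lambda \approx |E||F|/q$, so the task is to control the variance. Fix a nontrivial additive character $\chi$ of $\mathbb{F}_q$ and use $\frac1q\sum_{s\in\mathbb{F}_q}\chi\big(s(x_1\cdot y_1 - x_2\cdot y_2)\big) = \mathbf{1}[x_1\cdot y_1 = x_2\cdot y_2]$ to write
\[ N = \frac1q \sum_{s\in\mathbb{F}_q}\ \sum_{\substack{x_1,x_2\in E\\ y_1,y_2\in F}} \chi\big(s(x_1\cdot y_1 - x_2\cdot y_2)\big). \]
The term $s=0$ contributes exactly $|E|^2|F|^2/q$, and for each fixed $s$ the inner quadruple sum factors as $|S(s)|^2$, where $S(s) := \sum_{x\in E,\, y\in F} \chi(s\, x\cdot y)$. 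Thus
\[ N - \frac{|E|^2|F|^2}{q} = \frac1q \sum_{s\neq 0} |S(s)|^2, \]
and it remains to bound $|S(s)|$ uniformly over $s\neq 0$.

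The heart of the argument is a pointwise estimate $|S(s)|^2 \le q^d|E||F|$ for every $s\neq 0$. I would obtain this by applying Cauchy--Schwarz in the $x$-variable and then completing the outer sum from $E$ to all of $\mathbb{F}_q^d$:
\[ |S(s)|^2 \le |E|\sum_{x\in\mathbb{F}_q^d}\Big|\sum_{y\in F}\chi(s\,x\cdot y)\Big|^2 = |E|\sum_{y_1,y_2\in F}\ \sum_{x\in\mathbb{F}_q^d}\chi\big(s\,x\cdot(y_1-y_2)\big). \]
The inner sum over $x\in\mathbb{F}_q^d$ equals $q^d$ when $s(y_1-y_2)=0$ and vanishes otherwise; since $s\neq 0$ and $\mathbb{F}_q$ is a field, this forces $y_1=y_2$, leaving only the $|F|$ diagonal pairs and yielding $|S(s)|^2 \le q^d|E||F|$. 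Substituting this into the displayed identity and summing the at most $q$ nonzero values of $s$ gives $\big|N - |E|^2|F|^2/q\big| \le \frac1q\cdot q\cdot q^d|E||F| = q^d|E||F|$, as required.

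The one genuinely lossy step---and the source of the factor $q^d$ in the error term---is enlarging the range of $x$ from $E$ to all of $\mathbb{F}_q^d$ after Cauchy--Schwarz; this trades the unknown Fourier structure of $E$ for the clean orthogonality relation on the full group. I expect no serious obstacle beyond bookkeeping: the only points to watch are that the factorization of the quadruple sum uses $\chi(-t)=\overline{\chi(t)}$, and that the collapse to the diagonal $y_1=y_2$ genuinely requires $s\neq 0$, so the $s=0$ main term must be separated out first, exactly as above.
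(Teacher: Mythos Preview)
Your proof is correct and is precisely the standard Hart--Iosevich character-sum argument. The paper does not supply its own proof of this lemma; it is quoted from \cite{HI} as a known result, so there is nothing further to compare.
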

The following corollary is straightforward.
\begin{corollary}[\cite{HI}]\label{cor23}
Let $E$ and $F$ be two sets in $\mathbb{F}_q^d$ and let $\chi$ be an additive character of $\mathbb{F}_q$. Then we have 
\[\left\vert \sum_{x_1, x_2\in E}\sum_{y_1, y_2\in F}\sum_{t\in \mathbb{F}_q}\chi(t\cdot (x_1\cdot y_1-x_2\cdot y_2))-|E|^2|F|^2\right\vert \ll q^{d+1}|E||F|.\]
\end{corollary}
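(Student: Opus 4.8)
The plan is to evaluate the innermost sum over $t$ by the orthogonality relation for additive characters, which collapses the triple sum into a constant multiple of the counting quantity $N$ that already appears in Lemma \ref{coco}; the stated estimate then follows by a one-line rescaling. So the whole argument is a reduction of the corollary to the lemma, which is exactly why it is labelled ``straightforward''.

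First I would record that $\chi$ is to be taken nontrivial; for the principal character the claimed bound fails, since the left-hand side would equal $(q-1)|E|^2|F|^2$, which is not $O(q^{d+1}|E||F|)$. For a nontrivial additive character $\chi$ of $\mathbb{F}_q$ and any $a\in\mathbb{F}_q$, orthogonality gives
\[\sum_{t\in\mathbb{F}_q}\chi(t\cdot a)=\begin{cases} q, & a=0,\\ 0, & a\neq 0,\end{cases}\]
because when $a\neq 0$ the map $t\mapsto ta$ is a bijection of $\mathbb{F}_q$ and a nontrivial character sums to zero over the full additive group.

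Next I would apply this with $a=x_1\cdot y_1-x_2\cdot y_2$, so that the innermost sum equals $q$ precisely when $x_1\cdot y_1=x_2\cdot y_2$ and $0$ otherwise. Summing over $x_1,x_2\in E$ and $y_1,y_2\in F$ then yields
\[\sum_{x_1,x_2\in E}\sum_{y_1,y_2\in F}\sum_{t\in\mathbb{F}_q}\chi\bigl(t\cdot(x_1\cdot y_1-x_2\cdot y_2)\bigr)=q\cdot N,\]
where $N=\bigl|\{(x_1,x_2,y_1,y_2)\in E\times E\times F\times F:\ x_1\cdot y_1=x_2\cdot y_2\}\bigr|$ is exactly the quantity estimated in Lemma \ref{coco}.

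Finally I would invoke Lemma \ref{coco} in the form $\bigl|N-|E|^2|F|^2/q\bigr|\ll q^d|E||F|$ and multiply through by $q$, obtaining
\[\bigl|\,qN-|E|^2|F|^2\,\bigr|\ll q^{d+1}|E||F|.\]
Substituting $qN$ for the triple sum gives the corollary. I do not expect any genuine obstacle here: the only points requiring a moment of care are the orthogonality reduction and the implicit nontriviality of $\chi$, after which the estimate is simply Lemma \ref{coco} rescaled by a factor of $q$.
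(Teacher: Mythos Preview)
Your proposal is correct and matches the paper's own treatment: the paper simply declares the corollary ``straightforward'' from Lemma~\ref{coco}, and your orthogonality reduction followed by multiplication by $q$ is exactly that deduction. Your remark that $\chi$ must be nontrivial is a valid and necessary clarification.
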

We also make use of a result on the distribution of paths in a given set in $\mathbb{F}_q^2$.
\begin{lemma}[\cite{mot}]\label{star}
Let $E$ be a set in $\mathbb{F}_q^d$ and let $\lambda, \beta\in \mathbb{F}_q^*$. Assume that $|E|\gg q^{\frac{d+1}{2}}$, then the number of triples $(x, y, z)\in E^3$ such that $x\cdot y=\lambda$ and $x\cdot z=\beta$ is at most $|E|^3/q^2$.
\end{lemma}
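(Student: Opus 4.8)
The plan is to reduce, by Cauchy--Schwarz, to an $L^2$-estimate for a single fibre-counting function, and then to extract the true main term $|E|^3/q^2$ by subtracting the mean, arranging that the potentially large contributions cancel. Fix $\lambda,\beta\in\mathbb{F}_q^*$. Since $0\cdot y=0\neq\lambda$, the origin never occurs among the admissible $x,y,z$, so we may assume $0\notin E$. For $x\in\mathbb{F}_q^d$ write
\[a(x):=|\{y\in E:x\cdot y=\lambda\}|,\qquad b(x):=|\{z\in E:x\cdot z=\beta\}|,\]
so that the quantity to be bounded is $N=\sum_{x\in E}a(x)b(x)$. By Cauchy--Schwarz,
\[N\le\Big(\sum_{x\in E}a(x)^2\Big)^{1/2}\Big(\sum_{x\in E}b(x)^2\Big)^{1/2},\]
and it therefore suffices to prove $\sum_{x\in E}a(x)^2\le(1+o(1))|E|^3/q^2$; the identical argument with $\beta$ in place of $\lambda$ controls the $b$-sum, and multiplying the two bounds gives the lemma.

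To estimate $\sum_{x\in E}a(x)^2$ I would introduce the mean-zero function $A(x):=a(x)-|E|/q$, defined for every $x\in\mathbb{F}_q^d$, and expand
\[\sum_{x\in E}a(x)^2=\frac{|E|^3}{q^2}+\frac{2|E|}{q}\sum_{x\in E}A(x)+\sum_{x\in E}A(x)^2.\]
The linear term is easy: $\sum_{x\in E}A(x)=N_\lambda(E,E)-|E|^2/q$, so Lemma \ref{cor231} bounds it by $O(q^{(d-1)/2}|E|)$ and hence $\frac{2|E|}{q}\sum_{x\in E}A(x)=O(q^{(d-3)/2}|E|^2)$.

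The crux is the quadratic term. Because $A(x)^2\ge 0$ I would pass to the larger sum over all of $\mathbb{F}_q^d$ and evaluate it exactly by counting. Counting triples $(x,y,y')$ shows $\sum_{x\in\mathbb{F}_q^d}a(x)^2=q^{d-2}P+q^{d-1}|E|$, where $P$ is the number of ordered linearly independent pairs $(y,y')\in E^2$: two independent vectors confine $x$ to a codimension-$2$ affine subspace ($q^{d-2}$ choices), a repeated vector gives a hyperplane ($q^{d-1}$ choices), and a parallel but distinct pair gives no solution since $\lambda\neq 0$. Likewise $\sum_{x\in\mathbb{F}_q^d}a(x)=q^{d-1}|E|$. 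Substituting these into the identity $\sum_{x}A(x)^2=\sum_x a(x)^2-\tfrac{2|E|}{q}\sum_x a(x)+q^{d-2}|E|^2$ and writing $P=|E|^2-D$ with $D\ge 0$ the number of linearly dependent ordered pairs, the three $q^{d-2}|E|^2$-sized terms cancel and leave
\[\sum_{x\in\mathbb{F}_q^d}A(x)^2=q^{d-1}|E|-q^{d-2}D\le q^{d-1}|E|,\]
so in particular $\sum_{x\in E}A(x)^2\le q^{d-1}|E|$.

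Combining the three pieces gives $\sum_{x\in E}a(x)^2\le|E|^3/q^2+O(q^{(d-3)/2}|E|^2)+q^{d-1}|E|$, and a direct comparison shows both error terms are $\ll|E|^3/q^2$ exactly when $|E|\gg q^{(d+1)/2}$. This yields $\sum_{x\in E}a(x)^2\le(1+o(1))|E|^3/q^2$, and the same for the $b$-sum, so Cauchy--Schwarz gives $N\le(1+o(1))|E|^3/q^2$, which is the asserted bound. I expect the quadratic term to be the real obstacle: one fears that $E$ concentrated on lines through the origin could create many pairs $(y,y')$ sharing a common $x$, inflating the count. The decisive point is that in the $L^2$-norm of the \emph{mean-zero} part these dependent configurations enter with a negative sign---they are precisely $-q^{d-2}D$---so they can only lower the bound; this cancellation is what eliminates the naive $q^{d-2}|E|^2$ term and drives the threshold down to $q^{(d+1)/2}$.
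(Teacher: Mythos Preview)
Your proof is correct. In the paper this lemma is not proved; it is simply quoted from \cite{mot}, so there is no in-paper argument to compare against directly.

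Your route is a Cauchy--Schwarz reduction to a single second moment $\sum_{x\in E}a(x)^2$, followed by centering at the mean $|E|/q$ and enlarging the quadratic sum from $E$ to all of $\mathbb{F}_q^d$ so that it can be evaluated \emph{exactly} by elementary incidence counting. The decisive step---that the three $q^{d-2}|E|^2$ contributions cancel once one subtracts the mean, leaving $\sum_{x}A(x)^2=q^{d-1}|E|-q^{d-2}D\le q^{d-1}|E|$---is correctly identified and is exactly what produces the threshold $|E|\gg q^{(d+1)/2}$. Your observation that linearly dependent pairs enter with a negative sign, so concentration on lines through the origin can only help, is the right structural explanation. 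The argument in \cite{mot} and the surrounding literature proceeds instead via Fourier analysis (decomposing the counting function into characters and applying Plancherel); your approach is more elementary, avoids any character-sum machinery beyond the already-quoted Lemma~\ref{cor231} for the linear term, and delivers the same bound with constant $1+o(1)$.
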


With these lemmas in hand, we are ready to complete the proof of Lemma \ref{L2bound}.

\begin{proof}[Proof of Lemma \ref{L2bound}]
    We count triangles whose points come from the distinct sets, $E_j,$ using character sums. Let $\chi$ be an additive character of $\mathbb{F}_q$. We write
    \[f(\lambda_1, \lambda_2, \lambda_3) = q^{-3}\sum_{x\in E_1,~y\in E_2,~z\in E_3}\sum_{s_1,s_2, s_3\in \mathbb F_q}\chi(s_1(x\cdot y-\lambda_1))\chi(s_2(y\cdot z-\lambda_2))\chi(s_3(z\cdot x-\lambda_3)).\]
    Using this expression, we can similarly write the $L^2$ sum as
    \begin{align*}
    \sum_{\lambda_1, \lambda_2, \lambda_3\in \mathbb F_q} f^2(\lambda_1, \lambda_2, \lambda_3) &= \sum_{\lambda_1, \lambda_2, \lambda_3\in \mathbb F_q}q^{-6}\sum_{\substack{x\in E_1,~y\in E_2,~z\in E_3\\x'\in E_1,~y'\in E_2,~z'\in E_3}}\sum_{\substack{s_1,s_2, s_3\\s_1',s_2',s_3'}}\Big(\chi(s_1(\lambda_1-x\cdot y))\chi(s_2(\lambda_2-y\cdot z))\\
    &\chi(s_3(\lambda_3 - z\cdot x))\chi(s_1'(x'\cdot y'-\lambda_1))\chi(s_2'(y'\cdot z'-\lambda_2))\chi(s_3'(z'\cdot x'-\lambda_3))\Big)\\
    &= \sum_{\lambda_1, \lambda_2, \lambda_3\in \mathbb F_q}q^{-6}\sum_{\substack{x\in E_1,~y\in E_2,~z\in E_3\\x'\in E_1,~y'\in E_2,~z'\in E_3}}\sum_{\substack{s_1,s_2, s_3\\s_1',s_2',s_3'}}\Big(\chi(\lambda_1(s_1-s_1'))\chi(\lambda_2(s_2-s_2'))\\
    &\chi(\lambda_3(s_3-s_3'))\chi(s_1 x\cdot y-s_1' x'\cdot y')\chi(s_2y\cdot z-s_2'y'\cdot z')\chi(s_3 z\cdot x-s_3' z' \cdot x')\Big).
    \end{align*}
    By orthogonality, any term where there is an $s_i\neq s_i'$ will return a zero, so it is enough to consider the terms where $s_i=s_i'.$ We next separate out the main term, which is the term where all of the $s_i$ are zero, as we can compute it directly. This means that we can sum through the $q^3$ triples of $(\lambda_1, \lambda_2, \lambda_3)$ and get
    \[
    = q^3\left(\frac{|E|^6}{q^6}+q^{-6}\hskip -2em\sum_{\substack{x\in E_1,~y\in E_2,~z\in E_3\\x'\in E_1,~y'\in E_2,~z'\in E_3}}\sum_{\substack{(s_1, s_2, s_3)\\ \neq (0,0,0)}}
    \chi(s_1 (x\cdot y- x'\cdot y'))\chi(s_2(y\cdot z-y'\cdot z'))\chi(s_3( z\cdot x- z' \cdot x')\right).
    \]
    This is equal to
    \[
    \frac{|E|^6}{q^3}+q^{-3}\hskip -2em\sum_{\substack{x\in E_1,~y\in E_2,~z\in E_3\\x'\in E_1,~y'\in E_2,~z'\in E_3}}\sum_{\substack{(s_1, s_2, s_3)\\ \neq (0,0,0)}}
    \chi(s_1 (x\cdot y- x'\cdot y'))\chi(s_2(y\cdot z-y'\cdot z'))\chi(s_3( z\cdot x- z' \cdot x')).
    \]
    With the main term computed, we now turn our attention to the rest of the sum. We further split this sum up into the cases where we have $s_1$ is nonzero while $s_2$ and $s_3$ range through all of $\mathbb F_q$, which we denote by $I$, and when we have the terms where $s_1=0$ and $(s_2, s_3)\ne (0, 0)$, which we denote by $II$. For simplicity, we define the set of points that give dot product $\lambda_i$ with a point $x$ by $\ell_{\lambda_i}(x).$
    \begin{align*}
    I&=q^{-3}\sum_{\substack{x\in E_1,~y\in E_2,~z\in E_3\\x'\in E_1,~y'\in E_2,~z'\in E_3}}\sum_{\substack{s_1\neq 0\\ s_2,s_3\in\mathbb F_q}}
    \chi(s_1 (x\cdot y- x'\cdot y'))\chi(s_2(y\cdot z-y'\cdot z'))\chi(s_3( z\cdot x- z' \cdot x')\\
    &=q^{-3}\sum_{\substack{x\in E_1,~y\in E_2\\x'\in E_2,~y'\in E_2}}\sum_{s_1\neq 0}
    \chi(s_1 (x\cdot y- x'\cdot y'))\sum_{z,z'\in E_3}\sum_{s_2,s_2\in\mathbb F_q}\chi(s_2(y\cdot z-y'\cdot z'))\chi(s_3( z\cdot x- z' \cdot x').
    \end{align*}
    We now bound the inner sum by the following observation. If $y\cdot z\ne y'\cdot z'$ or $z\cdot x\ne z'\cdot x'$, then one of the sums over $s_2$ or $s_3$ is zero. Thus, we only need to consider tuples with $y\cdot z= y'\cdot z'$ and $z\cdot x= z'\cdot x'$. We notice that for fixed choices of $x, x' \in E_1$ and $y, y' \in E_2,$ and $z \in E_3,$ we have a very restrictive incidence structure. Namely, the point $z$ must lie on $\ell_\lambda(x)$ and $\ell_\mu(y)$ for some $\lambda, \mu \in \mathbb F_q$, fixing their values. Then, with the values of $\lambda$ and $\mu$ fixed, the point $z'$ has only one possible choice; it must lie on $\ell_\lambda(x')\cap \ell_\mu(y').$ Note that $|\ell_\lambda(x')\cap \ell_\mu(y')|\le 1$ if $x$ and $y$ belong to different sets $E_i$. This explains why we partitioned the set $E$ into $E_1, E_2,$ and $E_3$ at the beginning. This property will also be used several times in the rest of the proof. Therefore, since there are only $|E_3|$ choices for $z$, and each value of $z$ determines the only one possibility of $z',$ we can estimate this inner sum above by $|E_3|$ for any quadruple of $x, x'\in E_1,$ and $y,y'\in E_2.$ This gives us
    \begin{align*}
    I&=q^{-3}\sum_{\substack{x\in E_1,~y\in E_2\\x'\in E_2,~y'\in E_2}}\sum_{s_1\neq 0}
    \chi(s_1 (x\cdot y- x'\cdot y'))\sum_{z,z'\in E_3}\sum_{s_2,s_2\in\mathbb F_q}\chi(s_2(y\cdot z-y'\cdot z'))\chi(s_3( z\cdot x- z' \cdot x')\\
    &\leq q^{-1}\sum_{\substack{x\in E_1,~y\in E_2\\x'\in E_2,~y'\in E_2}}\sum_{s_1\neq 0}
    \chi(s_1 (x\cdot y- x'\cdot y'))\cdot |E_3|.\\
    \end{align*}
    We now add and subtract the term where $s_1=0$, remember the sizes of $E_1, E_2,$ and $E_3$, then consider the rest of the sum as another $L^2$-type sum counting dot products to get
    \begin{align*}
    I&\leq q^{-1}|E_3|\sum_{\substack{x\in E_1,~y\in E_2\\x'\in E_2,~y'\in E_2}}\sum_{s_1\neq 0}
    \chi(s_1 (x\cdot y- x'\cdot y'))\\
    &= q^{-1}|E_3|\left(\sum_{\substack{x\in E_1,~y\in E_2\\x'\in E_2,~y'\in E_2}}\sum_{s_1\in \mathbb F_q}
    \chi(s_1 (x\cdot y- x'\cdot y'))-|E_1|^2|E_2|^2\right).
    \end{align*}
By Corollary \ref{cor23} and the fact that $|E_i|\sim |E|$, we obtain $I\ll q^2|E|^3.$

We now turn our attention to estimating $II,$ the case where $s_1=0.$ Recall the definition of $II,$ keeping in mind that $s_1=0$
    \[
    II = q^{-3}\sum_{\substack{x\in E_1,~y\in E_2,~z\in E_3\\x'\in E_1,~y'\in E_2,~z'\in E_3}}\sum_{\substack{(s_2, s_3)\\ \neq (0,0)}}
    \chi(s_1 (x\cdot y- x'\cdot y'))\chi(s_2(y\cdot z-y'\cdot z'))\chi(s_3( z\cdot x- z' \cdot x').
    \]
    
    Because we have $s_1=0$, so the factor of $\chi(s_1(x\cdot y - x'\cdot y'))$ just becomes $\chi(0)=1.$ Also, we have that $(s_2, s_3)\ne (0, 0)$, so we break this sum up into terms where $s_2=0$ and terms where $s_2\neq 0.$ Notice that the terms where $s_2=0$ must have $s_3\neq 0,$ and the factors of $\chi(s_2(y\cdot z - y'\cdot z')$ will become one. Hence,
    \begin{align*}
    II&=q^{-3}\sum_{\substack{x\in E_1,~y\in E_2,~z\in E_3\\x'\in E_1,~y'\in E_2,~z'\in E_3}}\sum_{(s_2,s_3)\neq (0,0)}\chi(s_2(y\cdot z-y'\cdot z'))\chi(s_3( z\cdot x- z' \cdot x')\\
    &=q^{-3}\sum_{\substack{x\in E_1,~y\in E_2,~z\in E_3\\x'\in E_1,~y'\in E_2,~z'\in E_3}}\sum_{s_3\neq 0}\chi(s_3( z\cdot x- z' \cdot x'))\\
    &+q^{-3}\sum_{\substack{x\in E_1,~y\in E_2,~z\in E_3\\x'\in E_1,~y'\in E_2,~z'\in E_3}}\sum_{s_2\neq 0}\chi(s_2(y\cdot z-y'\cdot z'))\sum_{s_3\in \mathbb F_q}\chi(s_3( z\cdot x- z' \cdot x')\\
    &=:II_1+II_2.
    \end{align*}
Regarding $II_1$, one has
    \begin{align*}
        II_1 &= q^{-3}|E_2|^2\sum_{\substack{x\in E_1,~z\in E_3\\x'\in E_1,~z'\in E_3}}\sum_{s_3\neq 0}\chi(s_3( z\cdot x- z' \cdot x'))\\
        &= q^{-3}|E_2|^2\left(\sum_{z\cdot x = z' \cdot x'}\sum_{s_3\neq 0}\chi(s_3( z\cdot x- z' \cdot x'))+\sum_{z \cdot x \neq z' \cdot x'}\sum_{s_3 \neq 0}\chi(s_3( z\cdot x- z' \cdot x'))\right).
    \end{align*}
    By orthogonality, the second sum in this expression must be negative. We continue, using this fact and the size estimates on the $E_i$, to get
    \begin{align*}
        II_1 &\leq q^{-3}|E_2|^2\sum_{x\cdot z = x' \cdot z'}\sum_{s_3 \neq 0}\chi(s_3( z\cdot x- z' \cdot x'))\\
        &=q^{-3}|E_2|^2(q-1)|\{(x,z,x',z')\in E_1\times E_3 \times E_1 \times E_3 : z\cdot x = z' \cdot x'\}|\\
        &\ll q^{-2}|E|^2|\{(x,z,x',z')\in E_1\times E_3 \times E_1 \times E_3 : z\cdot x = z' \cdot x'\}|.
    \end{align*}
From here, Lemma \ref{coco} gives $II_1\ll q^{-3}|E|^6$ when $|E|\gg q^{3/2}$.

Regarding $II_2,$ we begin by noticing that if $z\cdot x \neq z' \cdot x',$ then the inner sum will give zero for that term, so moving forward, we can assume that $z\cdot x = z' \cdot x'.$ So
    \begin{align*}
        II_2&=q^{-3}\sum_{\substack{x\in E_1,~y\in E_2,~z\in E_3\\x'\in E_1,~y'\in E_2,~z'\in E_3}}\sum_{s_2\neq 0}\chi(s_2(y\cdot z-y'\cdot z'))\sum_{s_3\in \mathbb F_q}\chi(s_3( z\cdot x- z' \cdot x'))\\
        &=q^{-2}\sum_{\substack{y,y'\in E_2,\\ z, z'\in E_3\\
        z\cdot x = z'\cdot x'}}\sum_{s_2\neq 0}\chi(s_2(y\cdot z-y'\cdot z')).\\
    \end{align*}
    We then notice that if $y\cdot z \neq y'\cdot z',$ then the sum will be negative, by orthogonality of $\chi$. So for an upper bound, it suffices to also assume that $y\cdot z = y' \cdot z',$ giving
    \begin{align*}
        II_2&=q^{-2}\sum_{\substack{y,y'\in E_2,\\z\cdot x = z'\cdot x'}}\sum_{s_2\neq 0}\chi(s_2(y\cdot z-y'\cdot z'))\\
        &\leq q^{-1}\sum_{\substack{y\cdot z = y' \cdot z',\\z\cdot x = z'\cdot x'}}1\\
        &= q^{-1}|\{(x,y,z,x',y',z')\in (E_1\times E_2\times E_3)^2: y\cdot z = y' \cdot z',z\cdot x = z'\cdot x'\}|\\
        &=q^{-1}\left(|A| + |B| + |C|\right),
    \end{align*}
    where we split the set into three disjoint subsets, described below. For any triple in the set $(x,y,z)\in E_1\times E_2\times E_3$, there are fixed values $t= z \cdot x$ and $u= y \cdot z.$ Let $A$ be the part of the set with both $t$ and $u$ nonzero, $B$ be the part with one of $t$ and $u$ zero, and the other nonzero, and $C$ be the part of the set with $t=u=0.$ 
    
First we treat $A$, the case that $t$ and $u$ are both nonzero. Since $t$ and $u$ are both nonzero, the number of triples $(x',y',z')$ satisfying $z'\cdot x' = t$ and $y' \cdot z' = u$, under $|E|\gg q^{3/2}$, is at most $q^{-2}|E|^3$ by Lemma \ref{star}. Hence, $|A|\ll q^{-2}|E|^6.$ 

Now we consider $B$, the case that one of $t$ and $u$ is zero. Without loss of generality, we suppose $t=0$ and $u\neq 0.$ To estimate this case, we first estimate the number of quadruples $(y,z,y',z')\in (E_2\times E_3)^2$ satisfying $y\cdot z = u = y' \cdot z'.$ This is at most $(q^{-1}|E|^2)^2$ by Lemma \ref{cor231}. Once this quadruple has been fixed, the choices of $z, z'\in E_3,$ give at most $q$ choices for each of $x\in\ell_0(z)$ and $x'\in\ell_0(z'),$ respectively. Summarizing over all $u\ne 0$ gives $|B|\ll (q^{-1}|E|^2)^2\cdot q^2\cdot q=q|E|^4$.

Finally, we consider the subset $C,$ where $t=u=0.$ In this case, we first choose $x,y,x'y'\in(E_1\times E_2)^2,$ and notice that because $E_1$ and $E_2$ are supported on different lines through the origin, there is at most one choice of $z\in \ell_0(x) \cap \ell_0(y)$ for any given pair $(x,y)\in E_1\times E_2$ as well as at most one choice of $z' \in \ell_0(x')\cap \ell_0(y')$ for any given pair $(x',y')\in E_1\times E_2,$ giving an upper bound of $|E_1|^2|E_2|^2$ for this portion of the set. Recalling the estimates on the sizes of the $E_i$ leaves us with an upper bound of $|C| \ll |E|^4.$ 

Combining the bounds on the subsets $A, B$, and $C$, we see that
    \[II_2\ll q^{-1}\left(|A|+|B|+|C|\right)\ll q^{-1}\left(q^{-2}|E|^6+ q|E|^4+ |E|^4\right)\ll q^{-3}|E|^6,\]
since $|E|\gg q^{3/2}$.

In other words, $I+II\ll q^{-3}|E|^6$ whenever $|E|\gg q^{5/3}$, which completes the proof.
\end{proof}
\section{Proof of Theorem \ref{higher-dimension}}
The idea is the same as in the plane case. We will also need to bound $I$, $II_1$, and $II_2$. 

As mentioned in the introduction, Theorem \ref{higher-dimension} follows if $|E|\gg q^{\frac{d-1+k}{2}}$. Thus, without loss of generality, we suppose from now that $|E|\ll q^{\frac{d-1+k}{2}}$. The same argument implies $I\ll q^{2d-2}|E|^3.$
Lemma \ref{coco} also gives $II_1\ll q^{-3}|E|^6$ when $|E|\gg q^{\frac{d+1}{2}}$. Bounding $II_2$, we repeat the argument for $A$, $B$, and $C$. 

For $|A|$, we obtain the same upper bound of $q^{-2}|E|^6$ by Lemma \ref{star}. 

For $|B|$, we obtain 
\[|B|\ll \left(\frac{|E|^2}{q}\right)^2\cdot q^{2d-2}\cdot q=q^{2d-3}|E|^4.\]
We note that in order to obtain the exponent $d-\frac{d-1}{3}$ at the end, this upper bound of $B$ is not good enough. To be precise, we want $\frac{|E|^6}{q^3}\gg \frac{|B|}{q}$, so we would need the condition that $|E|^2\gg q^{2d-1}$, or $|E|\gg q^{d-\frac{1}{2}}$, which is worse than expected. The same happens for $|C|$. To proceed further, we need to consider the following two cases:

{\bf Case 1: $d=3$}

We count the number of tuples $(x, y, z, x', y', z')\in E^6$ such that $y\cdot z=y'\cdot z'$ and $x\cdot z=x'\cdot z'=0$. To bound the number of such tuples, we define $g(z)=|\{x\in E\colon x\cdot z=0\}|$ for $z\in E$ and zero otherwise, and $h$ to be the indicator function of $E$. With this setting, one has 
\[|B|+|C|\le  \sum_{\substack{y, z, y', z'\\ y\cdot z=y'\cdot z'}}g(z)g(z')h(y)h(y').\]
To tackle this functional sum, we need generalized versions of lemmas in Section 3. 

First the proof of Lemma \ref{coco} in \cite{HI} can be written in the form of complex-valued functions.

\begin{lemma}[\cite{HI}]\label{cococo}
Let $g$ and $h$ be complex-valued functions from $\mathbb{F}_q^d$ to $\mathbb{C}$. We have 
\[\left\vert \sum_{\substack{x_1, x_2, y_1, y_2\in \mathbb{F}_q^d\\ x_1\cdot y_1=x_2\cdot y_2}}g(x_1)g(x_2)h(y_1)h(y_2)-\frac{||g||_1^2||h||_1^2}{q} \right\vert\ll q^{d}||g||_2^2||h||_2^2,\]
here 
\[||g||_p^p=\sum_{x\in \mathbb{F}_q^d}|g(x)|^p.\]
\end{lemma}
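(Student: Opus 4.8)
The plan is to reproduce the Fourier-analytic proof of Lemma~\ref{coco}, tracking the two places where the indicator functions get replaced by the weights $g,h$. For $t\in\mathbb F_q$ set
\[G(t):=\sum_{x,y\in\mathbb F_q^d}g(x)\,h(y)\,\chi(t\,x\cdot y).\]
Expanding $|G(t)|^2$ and using that $\sum_{t\in\mathbb F_q}\chi\big(t(x\cdot y-x'\cdot y')\big)$ equals $q$ when $x\cdot y=x'\cdot y'$ and $0$ otherwise, one obtains
\[\sum_{t\in\mathbb F_q}|G(t)|^2=q\sum_{\substack{x,x',y,y'\in\mathbb F_q^d\\ x\cdot y=x'\cdot y'}}g(x)\,\overline{g(x')}\,h(y)\,\overline{h(y')}.\]
I will carry out the argument for nonnegative real $g,h$ — the only generality used in this paper, since there $g(z)=|\{x\in E:x\cdot z=0\}|$ and $h=1_E$ — so that the right-hand side is exactly $q$ times the quantity to be estimated; the genuinely complex case is identical once the two primed factors in the statement of Lemma~\ref{cococo} are read with complex conjugation. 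Isolating the $t=0$ term yields the main term $q^{-1}|G(0)|^2=q^{-1}\big(\sum_x g(x)\big)^2\big(\sum_y h(y)\big)^2=\|g\|_1^2\|h\|_1^2/q$, and it remains to show $q^{-1}\sum_{t\neq0}|G(t)|^2\ll q^{d}\|g\|_2^2\|h\|_2^2$.

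For the error term, write $G(t)=\sum_{x}g(x)\,\widehat h(tx)$ with $\widehat h(\xi):=\sum_{y}h(y)\chi(y\cdot\xi)$, and apply Cauchy--Schwarz in the variable $x$:
\[|G(t)|^2\le\Big(\sum_{x}|g(x)|^2\Big)\Big(\sum_{x}|\widehat h(tx)|^2\Big)=\|g\|_2^2\sum_{x\in\mathbb F_q^d}|\widehat h(tx)|^2.\]
Summing over $t\in\mathbb F_q^{*}$ and noting that the map $(t,x)\mapsto tx$ covers every point of $\mathbb F_q^d$ exactly $q-1$ times, Plancherel gives
\[\sum_{t\neq0}\sum_{x\in\mathbb F_q^d}|\widehat h(tx)|^2=(q-1)\sum_{\xi\in\mathbb F_q^d}|\widehat h(\xi)|^2=(q-1)\,q^{d}\,\|h\|_2^2.\]
Therefore $q^{-1}\sum_{t\neq0}|G(t)|^2\le q^{-1}(q-1)q^{d}\|g\|_2^2\|h\|_2^2< q^{d}\|g\|_2^2\|h\|_2^2$, which is the desired estimate (indeed with implied constant $1$).

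The proof is essentially routine; the one place that needs a moment's thought is the choice of how to apply Cauchy--Schwarz. Peeling off $g$ and then collapsing the pair $(t,x)$ into the single frequency $\xi=tx$ is what keeps the frequency sum at size $q^{d+1}$ before we divide by $q$; splitting the four variables less carefully, or invoking Plancherel before this collapse, would cost a spurious extra factor of $q^{d}$ and ruin the bound. A secondary, cosmetic point is the real-valued versus complex-valued discrepancy in the exact shape of the main term noted above, which is immaterial for the use of Lemma~\ref{cococo} in bounding $|B|+|C|$ in the case $d=3$.
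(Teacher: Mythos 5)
Your proof is correct and is essentially the standard orthogonality/Fourier argument that the paper itself omits (it only remarks that the proof of Lemma~\ref{coco} from \cite{HI} carries over to weighted functions): introduce $G(t)=\sum_{x,y}g(x)h(y)\chi(t\,x\cdot y)$, extract the $t=0$ main term, and bound $\sum_{t\neq 0}|G(t)|^2$ by Cauchy--Schwarz in $x$ followed by Plancherel after collapsing $(t,x)\mapsto tx$. Your observation that the lemma as literally stated needs conjugates (or nonnegative real $g,h$, which is the only case the paper uses) is a fair and accurate reading of a small imprecision in the statement, and all the normalizations and the final constant check out.
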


Second we need a result on the distribution of pairs of zero dot-product. 
\begin{lemma}[\cite{HI2}]\label{zero-product}
    For $E, F\subset \mathbb{F}_q^d$. The number of pairs $(x, y)\in E\times F$ such that $x\cdot y=0$ is at most 
    \[\frac{|E||F|}{q}+q^{\frac{d}{2}}\sqrt{|E||F|}.\]
\end{lemma}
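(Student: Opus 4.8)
The plan is to count $N:=|\{(x,y)\in E\times F : x\cdot y=0\}|$ by the same Fourier-analytic device that underlies Lemma~\ref{cor231}, adapted to the value $\lambda=0$ (which is excluded there). First I would expand the indicator of the event $x\cdot y=0$ via additive characters:
\[
N=\frac1q\sum_{x\in E}\sum_{y\in F}\sum_{t\in\mathbb F_q}\chi\big(t\,(x\cdot y)\big)=\frac{|E||F|}{q}+\frac1q\sum_{t\neq 0}\sum_{x\in E}S_F(tx),
\]
where $S_F(m):=\sum_{y\in F}\chi(m\cdot y)$ (using $t(x\cdot y)=(tx)\cdot y$). Thus the problem reduces to bounding the error term $\mathcal E:=\tfrac1q\sum_{t\neq 0}\sum_{x\in E}S_F(tx)$; geometrically this is the discrepancy of the count of incidences between the points of $F$ and the hyperplanes $\{x^\perp:x\in E\}$ through the origin.

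Next, for each fixed $t\neq 0$ I would apply Cauchy–Schwarz in the variable $x$ and then use that $x\mapsto tx$ is a bijection of $\mathbb F_q^d$, so that the $\ell^2$-mass of $S_F$ over the dilate $tE$ is at most its mass over all of $\mathbb F_q^d$:
\[
\Big|\sum_{x\in E}S_F(tx)\Big|\le |E|^{1/2}\Big(\sum_{x\in E}|S_F(tx)|^2\Big)^{1/2}\le |E|^{1/2}\Big(\sum_{m\in\mathbb F_q^d}|S_F(m)|^2\Big)^{1/2}.
\]
By orthogonality $\sum_{m\in\mathbb F_q^d}|S_F(m)|^2=\sum_{y,y'\in F}\sum_{m}\chi(m\cdot(y-y'))=q^d|F|$, so each inner sum is at most $q^{d/2}|E|^{1/2}|F|^{1/2}$. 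Summing this bound over the $q-1$ nonzero values of $t$ and dividing by $q$ yields $|\mathcal E|\le \tfrac{q-1}{q}\,q^{d/2}\sqrt{|E||F|}\le q^{d/2}\sqrt{|E||F|}$, which is exactly the claimed error, and in fact gives the two-sided estimate $\big|N-|E||F|/q\big|\le q^{d/2}\sqrt{|E||F|}$.

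There is essentially no serious obstacle here — the argument is a direct character-sum computation of the same flavor as Lemmas~\ref{cor231} and \ref{coco}. The only points requiring a word of care are: (i) that $\lambda=0$ forces us to redo the computation rather than quote Lemma~\ref{cor231} verbatim, since $0$ behaves differently from nonzero values; and (ii) that the Cauchy–Schwarz step must be performed \emph{before} summing over $t$, exploiting the dilation-invariance of the full $\ell^2$ norm of $S_F$, because summing $|S_F(tx)|$ crudely first and then invoking Parseval would cost an extra factor of $q^{1/2}$. Degenerate configurations (such as $0\in E$ or $0\in F$) need no separate treatment, as they are absorbed automatically by the Fourier identity for $N$.
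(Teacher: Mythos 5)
Your argument is correct: the character-sum expansion, the separation of the $t=0$ term, the per-$t$ Cauchy--Schwarz followed by the dilation $x\mapsto tx$ and Plancherel ($\sum_m|S_F(m)|^2=q^d|F|$) all check out, and they yield the two-sided bound $\bigl|N-|E||F|/q\bigr|\le q^{d/2}\sqrt{|E||F|}$, which is stronger than the stated one-sided inequality. The paper does not prove this lemma itself but quotes it from \cite{HI2}, and your derivation is essentially the standard argument from that source (the loss of a factor $q^{1/2}$ relative to Lemma \ref{cor231} coming precisely from the absence of Gauss-sum cancellation in the $t$-sum when $\lambda=0$), so there is nothing further to reconcile.
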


We now use Lemma \ref{cococo} and Lemma \ref{zero-product} to bound $|B|+|C|$. Indeed, by definition of $g$, we know that 
\[||g||_1\ll q^{\frac{3}{2}}|E|,\]
since we assume at the beginning that $|E|\ll q^{\frac{5}{2}}$. To bound $||g||_2^2$ from above, we observe that it is at most the number of triples $(x, y, z)\in E^3$ such that $z\cdot x=z\cdot y=0$. Since any plane in $\mathbb{F}_q^3$ contains at most $q^2$ points, we have 
\[||g||_2^2\ll q^2\cdot ||g||_1\ll q^{\frac{7}{2}}|E|.\]
So, $|B|+|C|\ll q^{3+\frac{7}{2}}|E|^2$.

In other words, 
\[II_2\ll \frac{|E|^6}{q^3}+q^{\frac{11}{2}}|E|^2.\]
Putting all computations together, we obtain 
\[I+II\ll \frac{|E|^6}{q^3}+q^{\frac{11}{2}}|E|^2+q^{2d-2}|E|^3.\]
This completes the proof of this case. 

{\bf Case 2: $d\ge 4$}

Notice that under $d\ge 4$, we have $d-\frac{d-1}{3}\ge \frac{d+2}{2}$. Thus, by Lemma \ref{zero-product}, the number of pairs $(x, y)\in E\times E$ such that $x\cdot y=0$ is at most $2|E|^2/q$. Thus, we always can choose a subset $E'\subset E$ such that $|E'|\gg |E|$ and for each $x\in E'$, the number of $y\in E$ with $x\cdot y=0$ is at most $C|E|/q$ for some large constant $C>0$. In this case, at the beginning, we should work with $E'$ instead of $E$. So, by abuse of notation, we might assume that for each $x\in E$, the number of $y\in E$ such that $x\cdot y=0$ is at most $C|E|/q$. With this property, by Lemma \ref{coco}, one has 
\[|B|+|C|\ll \left(\frac{|E|}{q}\right)^2\cdot \left(\frac{|E|^4}{q}+q^d|E|^2\right)\ll \frac{|E|^6}{q^3},\]
under $|E|\gg q^{\frac{2d+1}{3}}$.

In other words,
\[I+II\ll \frac{|E|^6}{q^3}+q^{2d-2}|E|^3.\]
This completes the proof of this case. 
\section{Discussions in higher dimensions}
We now discuss if the proof of Theorem \ref{higher-dimension} can be extended to the case $k>3$. 

Let's assume $k=4$ and $d=3$. By repeating the argument, one of the sums will be
 \begin{align*}
    I=&q^{-6}\sum_{\substack{x\in E_1,~y\in E_2,~z\in E_3\\x'\in E_1,~y'\in E_2,~z'\in E_3\\w\in E_4, w'\in E_4}}\sum_{\substack{(s_1, s_2, s_3)\ne (0, 0, 0)\\ s_4, s_5, s_6\in \mathbb{F}_q}}
    \chi(s_1 (x\cdot y- x'\cdot y'))\chi(s_2(y\cdot z-y'\cdot z'))\chi(s_3( z\cdot x- z' \cdot x'))\\
    &\cdot \chi(s_4(w\cdot x-w'\cdot x'))\chi(s_5(w\cdot y-w'\cdot y'))\chi(s_6(w\cdot z-w'\cdot z')).\\
    \end{align*}
Observe that if 
\[(x\cdot y-x'\cdot y', y\cdot z-y'\cdot z', z\cdot x-z'\cdot x')\ne (0, 0, 0),\]
then 
\[\sum_{(s_1, s_2, s_3)\ne (0, 0, 0)} \chi(s_1 (x\cdot y- x'\cdot y'))\chi(s_2(y\cdot z-y'\cdot z'))\chi(s_3( z\cdot x- z' \cdot x'))<0,\]
by orthogonality. Hence, in the above sum, we can assume that 
\[x\cdot y=x'\cdot y', ~y\cdot z=y'\cdot z', ~z\cdot x=z'\cdot x'.\]
The proof of Theorem \ref{higher-dimension} tells us that the number of such tuples $(x, y, z, x', y', z')$ is at most $q^{-3}|E|^6+q^{4}|E|^3$. 

Moreover, if one of the terms $w\cdot x-w'\cdot x'$, $w\cdot y-w'\cdot y'$, and $w\cdot z-w'\cdot z'$ is non-zero, then $I=0$ by the orthogonality of $\chi$. Thus, we can assume in addition that all of them are zero. 

If either $x$, $y$, and $z$ or $x'$, $y'$, and $z'$ are independent, then $I$ can be bounded by $q^4|E|^4$. Indeed, assume $x'$, $y'$, and $z'$ are linearly independent, then for each fixed $w\in E_4$, the number of $w'\in E_4$ such that 
\[ w\cdot x-w'\cdot x'=w\cdot y-w'\cdot y'=w\cdot z-w'\cdot z'=0\]
is at most one. So, in total, $I\ll q^4|E|^4$. Compared to the main term $|E|^8/q^6$, we would need the condition $|E|\gg q^{5/2}$ at the end. This is directly in line with the result for the distance function mentioned in the introduction.

However, in general, we cannot partition the set $E$ into three sets $E_1$, $E_2$, and $E_3$ such that $|E_i|\sim |E|$ and any triple $(x, y, z)\in E_1\times E_2\times E_3$ forms a linearly independent system. The following presents such an obstacle in three dimensions.

\begin{lemma}\label{obstacle}
    Given $E\subset \mathbb{F}_q^3$ with $|E|=q^{2+\epsilon}$ for some $\epsilon>0$. It is impossible to find $E_1, E_2, E_3\subset E$ such that $|E_i|\sim |E|$ and all triples $(x, y, z)\in E_1\times E_2\times E_3$ are linearly independent. 
\end{lemma}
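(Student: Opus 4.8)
The plan is to exhibit, for any candidate decomposition $E=E_1\sqcup E_2\sqcup E_3$ (or just subsets $E_i\subset E$ with $|E_i|\sim|E|$), a linearly dependent triple $(x,y,z)\in E_1\times E_2\times E_3$. The key structural fact is that in $\mathbb{F}_q^3$ a linearly dependent triple means the three vectors lie in a common plane through the origin, so I would organize the argument around a \emph{pigeonhole on planes through the origin}. There are $\sim q^2$ such planes (the points of $\mathbb{P}^2(\mathbb{F}_q)^\vee$), each containing $\sim q^2$ points; since $|E|=q^{2+\epsilon}$, the average number of points of $E$ on a plane through the origin is $\gtrsim q^{\epsilon}$, so by pigeonhole some plane $H$ through the origin satisfies $|E\cap H|\gg q^{\epsilon}$. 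More usefully, since every pair of points together with $0$ (when the pair is not collinear with $0$) spans a unique plane through the origin, one can count incidences: the number of triples $(u,v,\pi)$ with $u,v\in E$, $\pi$ a plane through $0$, and $u,v\in\pi$ is $\gg |E|^2$ minus lower-order terms, so some plane through $0$ contains $\gg |E|^2/q^2 = q^{\epsilon}$ points of $E$.

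However a single rich plane is not quite enough, because that plane might meet $E_3$ (say) in very few points. So the next step is to upgrade ``one rich plane'' to ``many rich planes,'' or rather to argue that the rich planes together cover a positive proportion of $E$. Concretely: call a plane $\pi$ through $0$ \emph{heavy} if $|E\cap\pi|\ge c q^{\epsilon}$ for a suitable small constant $c$. A standard dyadic/popularity argument shows the heavy planes account for $\gg|E|$ of the points of $E$ (the points lying only on light planes contribute $o(|E|)$ to the incidence count). Now fix a heavy plane $\pi_0$. We want points $x\in E_1\cap\pi_0$, $y\in E_2\cap\pi_0$, $z\in E_3\cap\pi_0$: any such triple lies in $\pi_0$, hence is linearly dependent, contradicting the assumption. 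For this it suffices that each $E_i$ meets $\pi_0$; if instead every heavy plane misses at least one $E_i$, we can 3-color the heavy planes by which $E_i$ they miss, pass to a color class carrying $\gg|E|$ points, and derive a contradiction with $|E_j|\sim |E|$ for the missed index $j$ — since then $E_j$ would be disjoint from a family of planes covering $\gg|E|$ points, yet $E_j$ itself has $\sim|E|$ points, forcing $E_j$ to live on the \emph{light} planes, which is impossible as light planes carry only $o(|E|)$ points in total. Hence some heavy plane meets all three $E_i$ and we are done.

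The main obstacle I anticipate is the bookkeeping in the ``covering'' step: one must be careful that ``heavy planes cover $\gg|E|$ points'' is genuinely incompatible with ``some $E_j$ avoids all of them,'' because a point can lie on many planes through $0$ (namely $\sim q$ of them, the pencil through any line $0x$). So the cleanest route is probably to not work with arbitrary planes but to first discard the $O(q)$ points of $E$ on any fixed line through $0$ is negligible, then note each remaining point $x$ determines its pencil of $\sim q$ planes, and run the pigeonhole at the level of \emph{pairs}: the number of collinear-with-$0$-free pairs $(x,y)\in E_i\times E_j$ is $\sim|E_i||E_j|\sim|E|^2$, each lying in a unique plane through $0$, and there are only $\sim q^2$ such planes, so by pigeonhole a single plane contains a pair from \emph{each} product $E_i\times E_j$ — but actually we need one plane good for all three simultaneously, which follows because a plane through $0$ containing a point of $E_1$ and a point of $E_2$ (noncollinear with $0$) is already determined, and a counting argument forces one such plane to also be heavy in $E_3$. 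A secondary technical point is ruling out the degenerate linear dependence coming from two of the three vectors being collinear (or one being zero): these contribute only $O(q\cdot|E|^2)=O(q^{5+2\epsilon})$ ordered triples, which is $o(|E|^3)=o(q^{6+3\epsilon})$, so they can be safely excluded and the dependence we find is ``genuine'' (three distinct lines in a plane), though for the statement as written even a degenerate dependence suffices.
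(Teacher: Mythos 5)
Your overall strategy is the same as the paper's: observe that a linearly dependent triple is exactly a triple lying in a common $2$-dimensional subspace, and therefore it suffices to produce one plane through the origin meeting all three sets $E_1,E_2,E_3$. The gap is that you never actually produce such a plane. Both of your routes stall at the same decisive point. In the ``heavy planes'' branch, the $3$-coloring step is broken: if every heavy plane misses some $E_i$ and you pass to the color class missing $E_j$, you only learn that $E_j$ avoids \emph{those} planes, not all heavy planes, so nothing forces $E_j$ onto the light planes. In the ``pairs'' branch, the pigeonhole gives you, for each index pair, \emph{some} plane rich in $E_i\times E_j$ pairs, but these need not coincide, and the sentence ``a counting argument forces one such plane to also be heavy in $E_3$'' is precisely the missing content. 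Naive counting cannot supply it: a single plane can carry up to $q^2$ points of each $E_i$, hence up to $q^4$ pairs, so the $\sim|E|^2$ pairs from $E_1\times E_2$ only guarantee $\gg |E|^2/q^4=q^{2\epsilon}$ planes meeting both sets --- and a family of $q^{2\epsilon}$ planes can easily avoid $E_3$.

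What closes the gap in the paper is Vinh's point--plane incidence theorem: for a point set $P$ and a family $\Pi$ of $2$-dimensional subspaces with $I(P,\Pi)=0$, the bound $\bigl|I(P,\Pi)-|P||\Pi|/q\bigr|\ll q\sqrt{|P||\Pi|}$ forces $|P||\Pi|\ll q^4$. Applied with $P=E_i$, $|E_i|\sim q^{2+\epsilon}$, this shows each $E_i$ is disjoint from at most $O(q^{2-\epsilon})$ of the $\sim q^2$ planes through the origin; hence $\gg q^2$ planes meet both $E_1$ and $E_2$, and a second application shows $E_3$ cannot avoid a family of $\gg q^2$ planes. This second-moment (spectral) input is genuinely necessary here --- an equidistribution statement, not a pigeonhole --- and is the ingredient your proposal lacks. (Your side discussion of degenerate dependencies is unnecessary: any three vectors in a $2$-dimensional subspace are dependent, collinear or not.)
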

\begin{proof}[Proof of Lemma \ref{obstacle}]
    Suppose there exist such three sets. This means that any line passing through the origin intersects only one of those three sets. A $2$-dimensional subspace is called spanned by $E_1\times E_2$ if it intersects both $E_1$ and $E_2$. It is clear that such a plane does not intersect $E_3$. We denote the set of those planes by $S$. We note that the total number of $2$-dimensional subspaces in $\mathbb{F}_q^3$ is $\sim q^2$. We now show that the size of $S$ is about $q^2$. Indeed, let $S_1$ be the set of subspaces avoiding $E_1$ and $S_2$ be the set of subspaces avoiding $E_2$. Then it is clear that 
   \[I(S_1, E_1)=0=I(S_2, E_2).\]
    Note that Vinh's theorem \cite{SZT} tells us that 
    \[\left\vert I(S_i, E_i)-\frac{|E_i||S_i|}{q}\right\vert \ll q\sqrt{|S_i||E_i|}.\]
    This implies $|S_i|\ll q^{2-\epsilon}$. 
In other words, most subspaces are spanned by $E_1$ and $E_2$, i.e. $|S|\gg q^2$. We apply the incidence bound above again, then it is clear that $I(S, E_3)>0$. So we have a contradiction. 
\end{proof}

This technical step might different when considering other functions. In the case of the distance function, we need to partition $E$ into $E_1$, $E_2$, and $E_3$ with $|E_i|\sim |E|$, and any line in $\mathbb{F}_q^3$ intersects at most two of these sets. If $|E|=q^{2+\epsilon}$, with the same proof and incidence bound between points and lines in \cite{PPV}, we can prove that it is impossible. However, one can think of sets of very different sizes, which might help to overcome this difficulty. 

In conclusion, when $d\ge 3$ and $k\ge 4$, it is still an open question of proving the exponent $d-\frac{d-1}{k}$ for the system of bilinear equations with $k$ variables and $l=\binom{k}{2}$. It also would be interesting to see if the method in this paper can be adapted in other settings, say, quasi-fields or finite p-adic rings. 

\section{Acknowledgements}
We would like to thank to the VIASM for the hospitality and for the excellent working
condition. This research was supported by Vietnam National Foundation for Science and Technology Development (NAFOSTED) under grant number 101.99--2021.09.

\end{document}